\documentclass[hidelinks,onefignum,onetabnum]{siamart251216}

\usepackage{lipsum}
\usepackage{amsfonts}
\usepackage{graphicx}
\usepackage{epstopdf}
\usepackage{algorithmic}
\ifpdf
  \DeclareGraphicsExtensions{.eps,.pdf,.png,.jpg}
\else
  \DeclareGraphicsExtensions{.eps}
\fi

\newsiamremark{remark}{Remark}
\newsiamremark{hypothesis}{Hypothesis}
\crefname{hypothesis}{Hypothesis}{Hypotheses}
\newsiamthm{claim}{Claim}
\newsiamremark{fact}{Fact}
\crefname{fact}{Fact}{Facts}

\theoremstyle{plain}
\theoremheaderfont{\normalfont\sc}
\theorembodyfont{\normalfont\itshape}
\theoremseparator{.}
\theoremsymbol{}
\newtheorem{assumption}{Assumption}[section]

\headers{MultiLevel Gradient-Regularized Newton}{Nick Tsipinakis and Matthias Voigt}

\title{Global $\mathcal{O}\big(\lowercase{k}^{-2}\big)$ Convergence of Multilevel Gradient-Regularized Newton Methods
}

\author{
Nick Tsipinakis%
\thanks{Faculty of Mathematics and Computer Science, UniDistance Suisse, Brig, Switzerland
  (\email{nikolaos.tsipinakis@unidistance.ch},
   \email{matthias.voigt@fernuni.ch}).}
\and
Matthias Voigt\footnotemark[1]
}

\usepackage{amsopn}

\newcommand{\R}{\mathbb{R}}
\newcommand{\N}{\mathbb{N}}
\usepackage{mathtools}
\usepackage{amssymb}
\newcommand{\C}{\mathcal{C}}
\newcommand{\F}{\mathcal{F}}
\newcommand{\I}{\mathcal{I}}
\newcommand{\J}{\mathcal{J}}
\newcommand{\Op}{\mathcal{O}}
\newcommand{\norm}[1]{\left\lVert #1 \right\rVert}
\newcommand{\Hk}{\nabla^2 f_k}
\newcommand{\grad}{\nabla}
\newcommand{\hess}{\nabla^2}
\newcommand{\gk}{\grad f_k}
\newcommand{\gkl}{\grad f_{k_\ell}}
\newcommand{\gkp}{\grad f_{k+1}}

\usepackage{enumitem}

\ifpdf
\hypersetup{
  pdftitle={Global $\mathcal{O}(\lowercase{k}^{-2})$ Convergence of Multilevel Gradient-Regularized Newton Methods},
  pdfauthor={N. Tsipinakis and M. Voigt}
}
\fi

\usepackage[numbers,sort&compress]{natbib}
\usepackage{bm}

\usepackage{pgfplots}
\pgfplotsset{compat=1.18}
\definecolor{mlrncolor}{rgb}{0.0,0.549,0.620}
\definecolor{nestedcolor}{rgb}{0.690,0.188,0.565}
\definecolor{rncolor}{gray}{0.067}
\definecolor{tlrncolor}{rgb}{0.133,0.369,0.659}
\pgfplotsset{
  mlrnaxis/.style={
    scale only axis,
    width=0.37\textwidth, height=0.2\textwidth,
    grid=major,
    major grid style={gray!25, line width=0.3pt},
    tick label style={font=\footnotesize},
    label style={font=\footnotesize},
    xlabel style={yshift=2pt},
    ylabel style={yshift=-2pt},
    tick align=outside, tick pos=left,
    major tick length=2.5pt,
    axis line style={line width=0.5pt},
    scaled x ticks=false,
    xticklabel style={/pgf/number format/fixed},
    max space between ticks=40,
    clip mode=individual,
    legend columns=4,
    legend style={font=\footnotesize, draw=gray!50, line width=0.4pt,
      /tikz/every even column/.append style={column sep=1.2em}},
    legend image code/.code={\draw[##1] (0cm,0cm) -- (0.75cm,0cm);},
  },
  mlrn/.style={color=mlrncolor, line width=1.1pt, solid},
  mlrnnested/.style={color=nestedcolor, line width=1.1pt, dash pattern=on 4pt off 2pt},
  rn/.style={color=rncolor, line width=1.1pt, dash pattern=on 5pt off 1.5pt on 1.1pt off 1.5pt},
  tlrn/.style={color=tlrncolor, line width=1.3pt, dash pattern=on 0.1pt off 2.2pt, line cap=round},
}
\DeclareMathAlphabet{\mathbfcal}{OMS}{cmsy}{b}{n}

\usepackage{algorithm}

\newcommand{\dd}{\mathrm{d}}

\begin{document}

\maketitle


\begin{abstract}
We develop and analyze multilevel gradient-regularized Newton methods for unconstrained convex optimization with Lipschitz-continuous Hessians. By computing Newton corrections in lower-dimensional subspaces, the proposed methods aim to reduce computational costs while preserving the global $\Op\big(k^{-2}\big)$ convergence rate in function values of the full-space method. To establish this rate for multilevel methods, we introduce a class of row-nested restriction operators that ensures monotonicity of the reduced-gradient norms across levels. Numerical experiments with row-nested operators demonstrate convergence behavior comparable to full-space gradient-regularized Newton, while offering substantial computational savings on large-scale instances.
\end{abstract}

\begin{keywords}
multilevel optimization, regularized Newton methods, convex optimization, row-nested restriction operators
\end{keywords}

\begin{MSCcodes}
90C25, 90C30, 65K05, 90C06
\end{MSCcodes}

\section{Introduction}
Cubic regularization of Newton's method \cite{Griewank1981Modification,Nesterov2006Cubic} is a fundamental second-order approach for smooth, possibly nonconvex optimization, achieving an $\Op(\varepsilon^{-3/2})$ complexity for first-order stationarity in nonconvex problems and a global $\Op\big(k^{-2}\big)$ function-value rate for convex objectives with Lipschitz-continuous Hessians. Subsequent developments include accelerated cubic Newton methods attaining an $\Op(k^{-3})$ rate \cite{nesterov2008accelerating} and Adaptive Regularization with Cubics (ARC) \cite{cartis2011adaptive,Cartis2011AdaptiveII,Cartis2012Evaluation}. However, cubic regularization requires solving a potentially expensive subproblem at each iteration.

Gradient-regularized Newton methods provide a simpler alternative, with iterations of the form
\begin{equation} \label{eq:gradient-regularized-newton}
    x_{k+1}=x_k-\left(\nabla^2 f(x_k)+a_k I\right)^{-1}\nabla f(x_k),
\end{equation}
where $a_k>0$. Following the early work of Polyak \cite{polyak2009regularized}, Mishchenko \cite{mishchenko2023regularized} and Doikov and Nesterov \cite{doikov2024gradient} established that choosing $a_k$ proportional to $\sqrt{L\|\nabla f(x_k)\|}$ yields the global $\Op\big(k^{-2}\big)$ function-value rate for convex objectives with Lipschitz-continuous Hessians, improving upon the $\Op(k^{-1})$ rate of standard gradient descent. Nevertheless, computing full-dimensional Hessians and solving the corresponding linear systems can be prohibitively expensive for large-scale problems.

Inspired by classical multigrid methods \cite{briggs2000multigrid}, multilevel optimization reduces these costs by computing corrections in lower-dimensional spaces, replacing full-dimensional Newton systems with smaller ones. Multilevel strategies have been successfully developed for Newton and regularized Newton methods \cite{gratton2008recursive, tsipinakis2023lowrank, ho2019newton,tsipinakis2025convergence,tsipinakis2024multilevel}, higher-order regularization methods, including ARC \cite{calandra2021high}, as well as line-search, accelerated gradient, and proximal methods \cite{ang2024mgprox, WenGoldfarb2010,Hovhannisyan2016MAGMA}.

In a foundational work, Gratton, Sartenaer, and Toint \cite{gratton2008recursive} introduced recursive multilevel trust-region (RMTR) methods, establishing an $\Op\big(\varepsilon^{-2}\big)$ worst-case complexity bound for first-order stationarity in possibly nonconvex problems. Their framework permits recursion to a coarser level when
\begin{equation}\label{ineq:rmtr-conditions}
    \norm{R_i g_{i,k}} \geq \kappa_g \norm{g_{i,k}},
    \qquad
    \norm{R_i g_{i,k}} > \varepsilon^g_{i-1},
\end{equation}
where $g_{i,k}$ is the gradient of the current level-dependent model, $\kappa_g>0$ is a prescribed parameter, and $R_i \in \R^{n_{i-1}\times n_i}$ restricts information to the next coarser level. Importantly, this eligibility condition is evaluated before computing a coarse correction, avoiding additional trial steps for level selection.

In general, second- or higher-order multilevel methods need not inherit the fast convergence rates of their full-space counterparts, since coarse corrections exploit only partial second- or higher-order information. Notable results preserving full-space convergence guarantees include RMTR \cite{gratton2008recursive} and the high-order multilevel methods of Calandra et al. \cite{calandra2021high}, which attain the $\Op\big(\varepsilon^{-3/2}\big)$ first-order stationarity complexity of cubic regularization for possibly nonconvex problems. More recently, for convex objectives, the multilevel gradient-regularized Newton method \cite{tsipinakis2025regularized} was shown to achieve the global $\Op\big(k^{-2}\big)$ function-value rate, while the adaptive multilevel Newton method \cite{Tsipinakis2025Adaptive} achieves local quadratic convergence for strongly convex objectives.

However, the deterministic formulations of these last two methods rely on a stronger level-selection criterion, namely
\begin{equation} \label{ineq:condtion-trial-step}
    \norm{R_i \grad f(x_k)} \geq \sigma \norm{\grad f(x_k)},
    \qquad
    \norm{R_i \grad f(x_{k+1})} \geq \sigma \norm{\grad f(x_{k+1})}.
\end{equation}
The second condition requires computing a trial coarse step to evaluate the gradient at the prospective next iterate, potentially increasing the computational cost. In contrast, the classical eligibility criterion \eqref{ineq:rmtr-conditions} uses only current-iterate information. To the best of our knowledge, a direct global $\Op\big(k^{-2}\big)$ convergence analysis for deterministic multilevel gradient-regularized Newton methods employing such a pre-step level-selection criterion has remained unavailable. Establishing this rate will provide theoretical justification for employing multilevel methods in convex optimization, combining the fast convergence guarantees of full-space Newton methods with the potential computational savings of lower-dimensional corrections.

\textbf{Contributions.}
We develop multilevel gradient-regularized Newton methods for convex objectives with Lipschitz-continuous Hessians, exploiting lower-dimensional subspaces and the classical level-selection criterion \eqref{ineq:rmtr-conditions} (cf.~\cite{gratton2008recursive}). We show that these methods preserve the global $\Op\big(k^{-2}\big)$ function-value rate of their full-space counterpart. For two levels, \Cref{thm:two-level-mixed} establishes this rate under arbitrary coarse/fine alternation. For multiple levels with general restriction operators, \Cref{thm:multilevel-conditional} requires an explicit fine-iteration frequency condition, which is eventually satisfied by the algorithm (\Cref{sec:conditional-rate}) but may be restrictive during coarse-dominated phases. To eliminate this condition, we introduce \emph{row-nested restriction operators} (\Cref{sec:nested-operators}), whose structure guarantees monotonicity of reduced-gradient norms across levels. Exploiting this property, we establish the global $\Op\big(k^{-2}\big)$ rate without imposing any fine-iteration frequency condition (\Cref{thm:nested-operators}).

Our analysis builds on the ideas of \cite{mishchenko2023regularized}, but addresses a fundamental difficulty absent from full-space methods: coarse and fine corrections control different gradient norms, whose convergence properties cannot be directly transferred across levels. To overcome this, we introduce a level-dependent weighted gradient measure that provides uniform control across all possible coarse-to-fine, coarse-to-coarse, and fine-to-coarse transitions. This, together with the proposed class of row-nested restriction operators (\Cref{def:nested-restrictions}), constitutes the main theoretical novelty of our analysis, enabling us to preserve the convergence rate of the full-space method without imposing any condition on the frequency of fine-level iterations.

We support our theoretical analysis with numerical experiments comparing the proposed methods against the full-space gradient-regularized Newton method. The results demonstrate that the two-level and row-nested multilevel methods exhibit convergence behavior comparable to that of their full-dimensional counterpart, while achieving substantial reductions in computational time on large-scale instances. Furthermore, the experiments highlight the importance of the restriction operator structure, with row-nested hierarchies generally exhibiting more favorable convergence behavior than their non-nested counterparts. These findings suggest that row-nested restriction operators can offer an attractive choice from both theoretical and computational perspectives.

\subsection{Related work}
Early multilevel optimization frameworks were developed by Nash \cite{nash2000multigrid} and Lewis and Nash \cite{lewis2005model}, including implementations based on truncated-Newton methods. A common feature of subsequent multilevel methods \cite{gratton2008recursive,gross2009convergence,ho2019newton, calandra2021high} is the use of coarse-level eligibility conditions of the form \eqref{ineq:rmtr-conditions}. Wen and Goldfarb~\cite{WenGoldfarb2010} developed a line-search multigrid method establishing R-linear convergence under strong convexity and suitable conditions on the search directions. Calandra et al.~\cite{calandra2021high} extended higher-order regularization to multilevel optimization. Other multilevel Newton-type methods \cite{ho2019newton,tsipinakis2024multilevel,tsipinakis2023lowrank} establish global convergence or fast local rates under specific assumptions. However, unlike our approach, these Newton-type methods do not employ gradient-dependent Hessian regularization to handle potentially singular Hessians, nor do they establish the global $\Op\big(k^{-2}\big)$ function-value rate for general convex objectives.

Tsipinakis and Parpas \cite{tsipinakis2025regularized} developed the multilevel gradient-regularized Newton method achieving the global $\Op\big(k^{-2}\big)$ function-value rate for convex objectives. However, unlike the methods proposed in this paper, their method requires the look-ahead condition \eqref{ineq:condtion-trial-step}, introducing additional computational costs.  This condition avoids directly analyzing transitions between coarse and fine gradient measures and thus their proof cannot be applied to our setting. In contrast, our analysis preserves the global $\Op\big(k^{-2}\big)$ rate under classical level selection without requiring look-ahead trial steps.


Nested subspaces are widely used in Krylov-based trust-region methods, reduced-basis approximation, and multilevel optimization \cite{gould1999lanczos,binev2011convergence,gross2009convergence}. The row-nested restriction operators considered here additionally guarantee monotonicity of the restricted-gradient norms across levels. Moreover, the weighted-gradient analysis is closely related to multiple-Lyapunov-function techniques for switched systems \cite{branicky1998multiple,zhao2017stability}. In our setting, we combine a suitably weighted gradient measure with regularized Newton decrease bounds to establish a global $\Op\big(k^{-2}\big)$ convergence rate in function values under arbitrary coarse/fine alternation.

\subsection{Main assumptions and notation}
We consider the unconstrained optimization problem
\begin{equation*}\label{eq:optimization-problem}
    \min_{x\in\R^n} f(x),
\end{equation*}
where $f:\R^n\to\R$ is a twice continuously differentiable convex function. Given an initial point $x_0 \in \R^n$, our goal is to develop iterative methods achieving the global $\Op\big(k^{-2}\big)$ convergence rate in function values. Throughout, we impose the following assumptions.

\begin{assumption}\label{ass:bounded-sublevel-sets}
The initial sublevel set $    \mathcal L(x_0) \coloneqq  \{x\in\R^n \, : \,  f(x)\leq f(x_0)\}$ is bounded.
\end{assumption}

Since $f$ is continuous, $\mathcal L(x_0)$ is compact, ensuring the existence of a global minimizer $x^*\in\R^n$. We denote $f^*\coloneqq f(x^*)$.

\begin{assumption}\label{ass:lipschitz-hessians}
There exists an  $L>0$ such that
\[
    \norm{\hess f(x)-\hess f(y)} \leq L\norm{x-y}     \qquad \forall x,y\in\R^n.
\]
\end{assumption}

These assumptions are standard in the analysis of second-order optimization methods; see, e.g., \cite{nesterov2018lectures}. Throughout, $\norm{\cdot}$ denotes the Euclidean norm for vectors and the induced spectral norm for matrices. For convenience, we write
\[
    \grad f_k\coloneqq\grad f(x_k),     \qquad     \hess f_k\coloneqq\hess f(x_k).
\]
Moreover, by compactness of $\mathcal L(x_0)$, the constants
\begin{equation} \label{eq:H-and-D}
  H\coloneqq\sup_{x\in\mathcal L(x_0)} \norm{\hess f(x)}, \qquad 
  D\coloneqq\sup_{x\in\mathcal L(x_0)}
\norm{x-x^*}
\end{equation}
are finite.

\textbf{Organization.}
In \Cref{sec:two-level}, we introduce the two-level gradient-regularized Newton method and establish its convergence rate. In \Cref{sec:multilevel}, we extend the analysis to multiple coarse levels, establishing convergence guarantees under a fine-iteration frequency condition and, alternatively, without this condition using row-nested restriction operators introduced in \Cref{sec:nested-operators}. Numerical experiments comparing the proposed methods with full-space gradient-regularized Newton are presented in \Cref{sec:experiments}. Finally, \Cref{sec:conclusions} concludes the paper.

\section{The two-level Newton method with gradient regularization} \label{sec:two-level}

We consider the Newton method with gradient regularization that alternates between the fine and one coarse level. For the two-level algorithm we consider a single restriction operator that transfers the information from fine to coarse space, i.e., $R : \R^n \to \R^d$, where $d < n$ are the coarse space dimensions. Moreover, the prolongation operator is defined in a standard way, i.e., $P \coloneqq R^\top$. For the operators we take the standard assumptions (see for instance \cite{gratton2008recursive,ho2019newton}).

\begin{assumption}\label{ass:R}
Let $R\in\R^{d\times n}$ be a full-row-rank restriction operator, with $d\leq n$, and let $P=R^\top$ be its corresponding prolongation operator. We define
\begin{equation*}
    \omega\coloneqq\norm{R}.
\end{equation*}
\end{assumption}
Let $\sigma \in (0, \min\{1, \omega \})$ and $\varepsilon > 0$. We define the sets of coarse and fine iterations
\begin{equation*}
  \C \coloneq \{ k \in \N  \, : \,  \norm{R \gk} \geq \sigma \norm{\gk}, \,  \norm{R \gk} \geq \varepsilon\}, \quad \F \coloneq \N \setminus \C.
\end{equation*}
We can consider $\C$ as the set of effective coarse iterations and $\F$ the set of fine iterations.   
Thus, if $m \in \C$ then the algorithm will perform a coarse iteration otherwise a fine iteration. The two-level gradient-regularized Newton algorithm is presented in \Cref{alg:two-level}.

\begin{algorithm}[t]
\caption{Two-Level Regularized Newton (TLRN)}
\label{alg:two-level}
\begin{algorithmic}
\REQUIRE $x_0 \in \R^n, R : \R^n \to \R^d, \sigma \in (0, \min\{1, \omega \}), \varepsilon > 0$. 
\ENSURE Sequence of iterates $(x_0,x_1,x_2,\ldots)$.
\FOR{$k = 0, 1,  \ldots$}
\IF{$\nabla f_k = 0$} \STATE \textbf{stop} \ENDIF
\STATE Set {\begin{equation*} 
    x_{k+1} \coloneqq
\begin{cases}
x_k - P \left(R \Hk P + \sqrt{\frac{\omega^3 L \norm{R \gk}}{2}} I_d \right)^{-1} R \gk, & k \in \C, \\
x_k - \left( \Hk  + \sqrt{\frac{L \norm{\gk}}{2}} I_n \right)^{-1}  \gk,  & k \in \F.
\end{cases}
\end{equation*}}
\ENDFOR
\end{algorithmic}
\end{algorithm}

The following result will be needed for the analysis of \Cref{alg:two-level}; it has been proved in \cite[Lemma 3.3]{tsipinakis2025regularized} and \cite[Lemma 3]{mishchenko2023regularized}.

\begin{lemma}[\cite{tsipinakis2025regularized,mishchenko2023regularized}]
\label{lemma:f-non-increasing}
Suppose that Assumptions \ref{ass:lipschitz-hessians} and \ref{ass:R} hold. Let $x_k\in\R^n$ and suppose that $x_{k+1}$ is obtained by either the coarse or fine update of \Cref{alg:two-level}, whenever the corresponding update is well-defined. Then
\begin{align}
f(x_k)-f(x_{k+1})
&\geq \frac{\norm{R\nabla f_{k+1}}^2}
{8\sqrt{\omega^3 L\norm{R\gk}/2}}
&& \text{for a coarse update},
\label{ineq:reduction-coarse}\\
f(x_k)-f(x_{k+1})
&\geq \frac{\norm{\nabla f_{k+1}}^2}
{6\sqrt{L\norm{\gk}/2}}
&& \text{for a fine update}.
\label{ineq:reduction-fine}
\end{align}
\end{lemma}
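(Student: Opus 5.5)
The plan is to treat the fine update first, since it is the full-space result of \cite{mishchenko2023regularized}, and then reduce the coarse update to the same argument written in coarse variables. Write $a\coloneqq\sqrt{L\norm{\gk}/2}$ and $h\coloneqq x_{k+1}-x_k$. The step is defined by $(\Hk+aI)h=-\gk$. Convexity gives $\Hk\succeq 0$, so $\norm{h}\le \norm{\gk}/a$; combined with the choice of $a$ this yields $L\norm{h}\le 2a$, or equivalently $\norm{h}\le\sqrt{2\norm{\gk}/L}$.

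Next I use Assumption~\ref{ass:lipschitz-hessians} in two ways. The gradient Taylor bound gives $\norm{\gkp-\gk-\Hk h}\le \frac{L}{2}\norm{h}^2$, and since $\gk+\Hk h=-ah$, this becomes $\norm{\gkp+ah}\le\frac L2\norm{h}^2$. The function Taylor bound gives
\begin{equation*}
f(x_{k+1})\le f(x_k)+\langle \gk,h\rangle+\tfrac12\langle\Hk h,h\rangle+\tfrac L6\norm{h}^3
= f(x_k)-\tfrac12\langle\Hk h,h\rangle-a\norm{h}^2+\tfrac L6\norm{h}^3 .
\end{equation*}
Because $L\norm{h}\le 2a$, the cubic term is at most $\frac a3\norm{h}^2$, so $f(x_k)-f(x_{k+1})\ge\frac{2a}{3}\norm{h}^2$. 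On the other side, $\norm{\gkp}\le a\norm{h}+\frac L2\norm{h}^2\le 2a\norm{h}$, which gives $\norm{h}^2\ge\norm{\gkp}^2/(4a^2)$. Combining the two bounds, $f(x_k)-f(x_{k+1})\ge \norm{\gkp}^2/(6a)$, which is \eqref{ineq:reduction-fine}.

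For the coarse update I write $h=Pu$ with $u\in\R^d$ satisfying $(R\Hk P+bI_d)u=-R\gk$, where $b\coloneqq\sqrt{\omega^3L\norm{R\gk}/2}$. Then $\norm{u}\le\norm{R\gk}/b$ and $\norm{h}\le\omega\norm{u}$. The coarse function $\phi(u)\coloneqq f(x_k+Pu)$ has Hessian $R\hess f(\cdot)P$, which is Lipschitz with constant $\omega^3L$ because $\norm{R}=\norm{P}=\omega$. Its gradient at $0$ is $R\gk$, and at $u$ it is $R\gkp$. The coarse step is therefore exactly the fine step for $\phi$ with $L$ replaced by $L_c\coloneqq\omega^3 L$. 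Repeating the argument verbatim gives $L_c\norm{u}\le 2b$ and $\norm{R\gkp}\le 2b\norm{u}$. The cubic bound, however, now has to be used in the form $\phi(u)\le\phi(0)+\langle\nabla\phi(0),u\rangle+\frac12\langle\nabla^2\phi(0)u,u\rangle+\frac{L_c}{6}\norm{u}^3$, which is valid since $\frac{L}{6}\norm{Pu}^3\le\frac{L_c}{6}\norm{u}^3$. The same computation then gives $\phi(0)-\phi(u)\ge\frac{2b}{3}\norm{u}^2\ge\norm{R\gkp}^2/(6b)$. This is stronger than the stated constant $8$, so \eqref{ineq:reduction-coarse} follows. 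Well-definedness requires $R\gk\neq0$ for $b>0$, and this holds on $\C$ because $\norm{R\gk}\ge\varepsilon$.

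The main obstacle is the bookkeeping in the coarse case: checking that the Lipschitz constant of the reduced Hessian is $\omega^3L$ and that the regularization $b$ is calibrated to this constant. Once the reduction to $\phi$ is made, everything else is the routine full-space calculation.
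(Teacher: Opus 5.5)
Your proof is correct. The paper gives no proof of its own. It defers to \cite{mishchenko2023regularized} for the fine estimate and to \cite{tsipinakis2025regularized} for the coarse one.

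\textbf{Fine case.} Your computation is exactly the cited mechanism: the step bound $L\|h\|\le 2a$, the gradient Taylor bound giving $\|\nabla f_{k+1}\|\le 2a\|h\|$, and the cubic Taylor bound giving a decrease of at least $\frac{2a}{3}\|h\|^2$. It reproduces the constant $6$.

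\textbf{Coarse case.} Here your route differs from the one the paper implicitly uses. The appendix proofs of \Cref{lemma:multilevel-coarse-estimates} and \Cref{lemma:finite-coarse} show that the cited argument works in the full space. It combines $\|R\nabla f_{k+1}\|\le 2\alpha_k\|d_{i,k}\|$ with the cruder descent bound $f(x_k)-f(x_{k+1})\ge\frac{\alpha_k}{2}\|d_{i,k}\|^2$, which is where the constant $8$ comes from. You instead pass to $\phi(u)=f(x_k+Pu)$. Its Hessian $R\nabla^2 f(x_k+Pu)P$ is $\omega^3L$-Lipschitz, and its regularized Newton step with parameter $\sqrt{\omega^3L\|\nabla\phi(0)\|/2}$ is literally the coarse step. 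This makes the coarse estimate a direct corollary of the fine one and gives the sharper constant $6$. That improvement would carry through to the later constants, for instance $128$ becoming $96$ in \Cref{prop:rate-coarse-steps}.

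\textbf{Convexity.} You use $\nabla^2 f_k\succeq 0$, and hence $R\nabla^2 f_kP\succeq0$, to get $\|h\|\le\|\nabla f_k\|/a$ and $\|u\|\le\|R\nabla f_k\|/b$. This comes from the paper's standing convexity assumption, not from the hypotheses listed in the lemma. That is legitimate here, but you should state it explicitly, since without it those step bounds fail.
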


\begin{corollary}\label{cor:f-nonincreasing} Suppose that Assumptions \ref{ass:lipschitz-hessians} and \ref{ass:R} hold and that the sequence ${(x_k)}_{k\in\N}$ is generated by \Cref{alg:two-level}. Then the sequence of function values ${(f(x_k))}_{k\in\N}$ is non-increasing, i.e.,
\[
f(x_{k+1}) \leq f(x_k), \qquad k\in\N.
\]
\end{corollary}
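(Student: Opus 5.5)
The plan is an induction on $k$ that combines \Cref{lemma:f-non-increasing} with a well-definedness check for each update of \Cref{alg:two-level}. Fix $k$ and assume $\nabla f_k\neq 0$; otherwise the algorithm stops and there is nothing to prove. I would first show that the update producing $x_{k+1}$ is well defined.

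\emph{Fine step.} Here $\Hk$ is positive semidefinite because $f$ is convex. The shift $\sqrt{L\norm{\gk}/2}$ is strictly positive since $\gk\neq 0$. Hence $\Hk+\sqrt{L\norm{\gk}/2}\,I_n$ is positive definite and therefore invertible.

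\emph{Coarse step.} The matrix $R\Hk P = R\Hk R^\top$ is positive semidefinite, since $v^\top R\Hk R^\top v = (R^\top v)^\top \Hk (R^\top v)\geq 0$. Moreover $k\in\C$ gives $\norm{R\gk}\geq\varepsilon>0$. So the shift $\sqrt{\omega^3L\norm{R\gk}/2}$ is strictly positive, and the coarse matrix is positive definite as well. This well-definedness step is the only point needing care, and it is where both the convexity assumption and the threshold $\varepsilon$ in the definition of $\C$ are used.

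With the update well defined, \Cref{lemma:f-non-increasing} applies. In the coarse case, \eqref{ineq:reduction-coarse} bounds $f(x_k)-f(x_{k+1})$ from below by a ratio whose numerator is a squared norm, hence $\geq 0$, and whose denominator is strictly positive. In the fine case, \eqref{ineq:reduction-fine} gives the same conclusion. Either way $f(x_{k+1})\leq f(x_k)$.

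If the algorithm stops at some finite $k$, the sequence is finite and the claim holds trivially on its index range. Applying the argument to every $k$ then yields that ${(f(x_k))}_{k\in\N}$ is non-increasing.
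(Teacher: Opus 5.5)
Your proposal is correct and follows the paper's argument. The paper states the corollary without proof as an immediate consequence of \Cref{lemma:f-non-increasing}: the right-hand sides of \eqref{ineq:reduction-coarse} and \eqref{ineq:reduction-fine} are nonnegative. Your positive-definiteness check, which uses the standing convexity assumption, makes explicit the well-definedness the paper takes for granted. You do not need $\varepsilon$ for that check, since $\norm{R\gk}\geq\sigma\norm{\gk}>0$ whenever $\gk\neq 0$.
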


The following stability estimates for the coarse and fine corrections were also established in \cite{tsipinakis2025regularized} and \cite{mishchenko2023regularized}, and hold for all coarse and fine updates, respectively:
\begin{alignat}{2}
\norm{R \nabla f_{k+1}} &\leq 2 \norm{R \nabla f_k} &\qquad&  \text{for a coarse update}, \label{ineq:reduced-grad-bound}\\
\norm{\nabla f_{k+1}} &\leq 2 \norm{\nabla f_k} &\qquad& \text{for a fine update}. \label{ineq:full-grad-bound}
\end{alignat}

\subsection{Convergence analysis}
When $\C = \emptyset$, then \Cref{alg:two-level} coincides with the gradient regularization of Newton's method (see \cite{doikov2024gradient,mishchenko2023regularized}) which converges with the fast $\mathcal{O}\big(k^{-2}\big)$ rate. In what follows we show that this rate can be attained when the sequence ${(x_k)}_{k \in \N}$ is always generated from the coarse space, i.e.,
\[
x_{k+1} = x_k - P \left(R \Hk P + \sqrt{\frac{\omega^3 L \norm{R \gk}}{2}} I_d \right)^{-1} R \gk.
\]
For the purposes of this result, the condition $\norm{R\grad f_k}\geq\varepsilon$ is therefore not imposed. 

\begin{proposition}
\label{prop:rate-coarse-steps}
Suppose that Assumptions~\ref{ass:bounded-sublevel-sets} and \ref{ass:lipschitz-hessians}
hold, and let ${(x_k)}_{k\in\N}$ be generated by applying the coarse update
of Algorithm~\ref{alg:two-level} at every iteration. Assume that
\[
    \|R\nabla f_k\|\geq \sigma\|\nabla f_k\|,
    \qquad k\in\N.
\]
Then
\[
    f(x_k)-f^*=\mathcal{O}\big(k^{-2}\big).
\]
Moreover, the constant in the polynomial bound is proportional to
$\sigma^{-3}$.
\end{proposition}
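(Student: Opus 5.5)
The plan is to adapt the full-space argument of \cite{mishchenko2023regularized} to the reduced gradient, using the assumption $\norm{R\grad f_k}\geq\sigma\norm{\grad f_k}$ to pass between $\norm{R\grad f_k}$ and the convexity gap. Write $\delta_k\coloneqq f(x_k)-f^*$ and $g_k\coloneqq\norm{R\grad f_k}$.

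\textbf{Step 1: decrease and upper bound on $\delta_k$.} By \Cref{cor:f-nonincreasing} all iterates stay in $\mathcal L(x_0)$. Convexity then gives
\[
\delta_k\leq\norm{\grad f_k}\,\norm{x_k-x^*}\leq D\norm{\grad f_k}\leq \frac{D}{\sigma}\,g_k ,
\]
so $g_k\geq \sigma\delta_k/D$. The coarse decrease \eqref{ineq:reduction-coarse} reads
\[
\delta_k-\delta_{k+1}\geq c\,\frac{g_{k+1}^2}{\sqrt{g_k}},\qquad c\coloneqq\frac{1}{8\sqrt{\omega^3L/2}} .
\]

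\textbf{Step 2: turn the decrease into a recursion in $\delta$.} The difficulty is that the numerator involves $g_{k+1}$ while the denominator involves $g_k$. I would split into two cases.

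\emph{Case (a): $g_{k+1}\geq g_k/4$.} Then $g_{k+1}^2/\sqrt{g_k}\geq \tfrac{1}{16}\,g_{k+1}^{3/2}$, and by Step 1, $g_{k+1}\geq\sigma\delta_{k+1}/D$. Hence
\[
\delta_k-\delta_{k+1}\geq \frac{c}{16}\Big(\frac{\sigma}{D}\Big)^{3/2}\delta_{k+1}^{3/2} .
\]

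\emph{Case (b): $g_{k+1}<g_k/4$.} Here the reduced gradient contracts by a factor $4$. Since $\delta_{k+1}\leq (D/\sigma)\,g_{k+1}$, such iterations force geometric decay of the upper bound $(D/\sigma)\,g$. I expect this case to be the main obstacle. The way around it is the device of \cite{mishchenko2023regularized}: the inequality $\delta_k-\delta_{k+1}\geq C\delta_{k+1}^{3/2}$ holds on the case (a) indices. On the case (b) indices, I would instead use the bound $\delta_{k+1}\leq (D/\sigma)\,g_{k+1}$ together with the stability estimate \eqref{ineq:reduced-grad-bound}, $g_{j+1}\leq 2g_j$. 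This caps growth of $g$ between consecutive indices, so a block of type (a) iterations cannot undo the factor-$4$ contractions. Consequently, among the first $k$ iterations either at least half are of type (a), or $g_k$, and hence $\delta_k$, is bounded by $g_0\,2^{-\Omega(k)}$ times $D/\sigma$.

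\textbf{Step 3: conclude.} The standard lemma applies on the type-(a) indices: for a nonincreasing sequence with $\delta_j-\delta_{j+1}\geq C\delta_{j+1}^{3/2}$ one has $\delta_{j+1}^{-1/2}-\delta_j^{-1/2}\geq c'\sqrt C$ (this uses $\delta_{j+1}\leq\delta_j\leq\delta_0$). Summing over at least $k/2$ such indices gives
\[
\delta_k\leq\frac{\mathcal{O}(1)}{C\,k^2},\qquad C\propto\sigma^{3/2}D^{-3/2}\omega^{-3/2}L^{-1/2} .
\]
Combined with the exponentially small alternative, this yields $\delta_k=\mathcal{O}(k^{-2})$ with constant proportional to $C^{-1}\propto\sigma^{-3/2}$ at minimum. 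To match the stated $\sigma^{-3}$, I would track constants in the conservative way, squaring the relation between $\delta$ and $g$ through $D/\sigma$. That constant bookkeeping, together with the case (b) counting argument, is where care is needed.
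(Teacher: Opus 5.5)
Your skeleton is the paper's. You split the indices according to whether $\norm{R\grad f_{m+1}}$ exceeds $\tfrac14\norm{R\grad f_m}$. When at most half the indices are non-contracting, you get geometric decay, $g_k\le 2^{3|A|-2k}g_0\le 2^{-k/2}g_0$. Otherwise you run a $3/2$-power recursion on the gap. One small correction to your wording: the stability bound $g_{m+1}\le 2g_m$ is what you use on the type (a) indices, not on the type (b) ones.

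\textbf{The gap is in Step 3.} The claimed increment $\delta_{j+1}^{-1/2}-\delta_j^{-1/2}\ge c'\sqrt{C}$ is false. Since $C$ has the units of $\delta^{-1/2}$, the increment must scale like $C$. Concretely, for the recursion with equality, $\delta_j-\delta_{j+1}=C\delta_{j+1}^{3/2}$, the increment tends to $C/2$ as $\delta_j\to0$. This is far below $c'\sqrt C$ when $C$ is small, which is exactly the small-$\sigma$ regime in question. The correct computation uses $\delta_j\ge\delta_{j+1}\bigl(1+C\sqrt{\delta_{j+1}}\bigr)$ and $(1+s)^{-1/2}\le 1-\frac{s}{2(1+s)}$, and gives
\[
\delta_{j+1}^{-1/2}-\delta_j^{-1/2}\ \ge\ \frac{C}{2\bigl(1+C\sqrt{\delta_{j+1}}\bigr)}\ \ge\ \frac{C}{2\bigl(1+C\sqrt{\delta_0}\bigr)} .
\]
Summing over at least $k/2$ type (a) indices gives $\delta_k\le 16\bigl(1+C\sqrt{\delta_0}\bigr)^2/(C^2k^2)$, not $\mathcal O(1)/(Ck^2)$. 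Your ``$\sigma^{-3/2}$ at minimum'' is therefore an artifact of this algebra error. The proposed remedy, ``tracking constants conservatively by squaring the relation between $\delta$ and $g$'', is not an argument. The $\sigma^{-3}$ is simply $C^{-2}$ with $C\propto\sigma^{3/2}$, and it comes out automatically once the increment is computed correctly.

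Your implicit recursion, with $\delta_{j+1}^{3/2}$ on the right, also leaves a residual dependence on $\delta_0$ in the constant. The paper avoids this as follows.
\begin{itemize}
\item On a non-contracting index $m$ it bounds the numerator rather than the denominator: $g_{m+1}^2\ge g_m^2/16$. This gives $f(x_m)-f(x_{m+1})\ge g_m^{3/2}/\bigl(128\sqrt{\omega^3L/2}\bigr)$.
\item It then applies $\norm{R\grad f_m}\ge\sigma\norm{\grad f_m}$ and convexity at $x_m$, not at $x_{m+1}$.
\item It chains along the enumerated indices $k_0<\cdots<k_{\tau-1}<k_\tau=k$ using monotonicity of $f(x_j)$. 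This yields the explicit recursion $\Delta_{\ell+1}\le\Delta_\ell-C\Delta_\ell^{3/2}$.
\item Nonnegativity of $\Delta_{\ell+1}$ forces $C\sqrt{\Delta_\ell}<1$, and \Cref{prop:recurrence-rate} gives $\Delta_\tau\le 1/\bigl(C^2(1+\tau/2)^2\bigr)$.
\end{itemize}
The resulting constant is exactly proportional to $\sigma^{-3}$.
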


\begin{proof}
Fix $k\in\N$ and set $ I^k\coloneqq \left\{ m\in\{0,\ldots,k-1\} \, : \, \|R\nabla f_{m+1}\|>\frac14\|R\nabla f_m\|
\right\}
$. We distinguish two cases.

\textbf{Case I:} $\big|I^k\big| \le k/2$.
By the definition of $I^k$ and \eqref{ineq:reduced-grad-bound},
\[
\|R\nabla f_{m+1}\|
\leq
\begin{cases}
\frac14\|R\nabla f_m\|, & m\notin I^k,\\[1ex]
2\|R\nabla f_m\|, & m\in I^k.
\end{cases}
\]
Therefore,
\[
    \|R\nabla f_k\|
    \leq
    \frac{2^{\left|I^k\right|}}{4^{k-\left|I^k\right|}}
    \|R\nabla f_0\|
    \leq
    2^{-k/2}\|R\nabla f_0\|.
\]
Using convexity and the coarse-level condition,
\[
\begin{aligned}
f(x_k)-f^*
&\leq D\|\nabla f_k\|
\leq \frac{D}{\sigma}\|R\nabla f_k\| 
\leq
\frac{\omega D}{\sigma}\,
2^{-k/2}\|\nabla f_0\|.
\end{aligned}
\]
Thus, in this case the function value gap decreases geometrically and,
in particular, is $\mathcal{O}\big(k^{-2}\big)$.

\textbf{Case II:} $\big|I^k\big| > k/2$.
Let $\tau\coloneqq \big|I^k\big|$, enumerate $    I^k=\{k_0,\ldots,k_{\tau-1}\},   \   k_0<\cdots<k_{\tau-1}<k,$
and set $k_\tau\coloneqq k$. By \eqref{ineq:reduction-coarse} and the definition
of $I^k$, for $\ell=0,\ldots,\tau-1$,
\[
\begin{aligned}
f(x_{k_\ell})-f(x_{k_\ell+1})
&\geq \frac{\|R\nabla f_{k_{\ell}+1}\|^{2}}{8(\omega^3L\|R\nabla f_{k_{\ell}}\|/2)^{1/2}} \ge
\frac{\|R\nabla f_{k_\ell}\|^{3/2}}{128(\omega^3L/2)^{1/2}} \geq
\frac{\sigma^{3/2}\|\nabla f_{k_\ell}\|^{3/2}}
     {128(\omega^3L/2)^{1/2}} .
\end{aligned}
\]
    Since \(I^k \subseteq\{0,\ldots,k-1\}\), it follows that
\(
k_\ell+1\le k_{\ell+1}\) for \( \ell=0,\ldots,\tau-1.
\)
    Moreover, by \eqref{ineq:reduction-coarse}, the sequence
${(f(x_j))}_{j\in\N}$ is non-increasing. Hence,
\begin{equation*}
  f(x_{k_{\ell}}) - f(x_{k_{\ell+1}}) \geq \frac{\sigma^{3/2}}{128(\omega^3L/2)^{1/2}}\|\nabla f_{k_\ell}\|^{3/2}.
\end{equation*}
Combining this with convexity, i.e., $f(x_{k_\ell}) - f^* \leq \langle \nabla f_{k_\ell}, x_{k_\ell} - x^* \rangle \le D \| \nabla f_{k_\ell} \|$, we obtain
\[
f(x_{k_{\ell+1}})-f^*
\leq
f(x_{k_\ell})-f^*
-C_{\mathcal C}\bigl(f(x_{k_\ell})-f^*\bigr)^{3/2},
\]
where
\[
    C_{\mathcal C}
    \coloneqq
    \frac{\sigma^{3/2}}
    {128(D^3\omega^3L/2)^{1/2}}.
\]
Hence, \Cref{prop:recurrence-rate} yields
\[
    f(x_k)-f^*
    \leq
    \frac{1}{C_{\mathcal C}^2(1+\tau/2)^2}
    \leq
    \frac{1}{C_{\mathcal C}^2(1+k/4)^2}
    =
    \mathcal{O}\!\left(\frac{1}{\sigma^3k^2}\right).
\]
 Combining the two cases proves
the result.
\end{proof}

\Cref{prop:rate-coarse-steps} analyzes the limiting case in which only coarse corrections are performed and shows that the coarse-only iteration retains the $\mathcal{O}\big(k^{-2}\big)$ rate of the full-space regularized Newton method. Its proof adapts the strategy of \cite[Theorem~1]{mishchenko2023regularized} to the reduced-gradient setting. A key feature of this analysis is that every iteration controls the same quantity, namely $\|R\nabla f_k\|$, so the corresponding estimates can be propagated consistently along the sequence.

The situation is fundamentally different for \Cref{alg:two-level}. Coarse iterations naturally control $\|R\nabla f_k\|$, whereas fine iterations control $\|\nabla f_k\|$, and neither of these quantities is necessarily contractive when the algorithm switches to the other level. Thus, the fact that both the coarse-only and fine-only schemes achieve an $\mathcal{O}\big(k^{-2}\big)$ rate does not by itself imply convergence, or the same rate, for an arbitrary mixture of the two iteration types. The main difficulty is therefore not the behavior within either level, but the transitions between them.

The following lemma provides the cross-level stability estimate needed to bridge this mismatch: although a coarse correction need not decrease the full gradient norm, it can increase it by at most a uniform factor. 

\begin{lemma} \label{lemma:bounded-gradient-in-coarse-level}
  Suppose that Assumptions \ref{ass:bounded-sublevel-sets}, \ref{ass:lipschitz-hessians}, and \ref{ass:R} hold and that \({(x_k)}_{k\in\mathbb N}\) is generated by Algorithm 2.1. Let 
\begin{equation*}
M_\C \coloneqq 1 + \frac{H \sqrt{2 \omega}}{\sqrt{L \varepsilon}},
\end{equation*}
where $H$ is defined in \eqref{eq:H-and-D}. Then, for any $k \in \C$ we have 
\begin{align*}
    \norm{\gkp} 
    & \leq M_\C \norm{\gk} \\
\end{align*}
\end{lemma}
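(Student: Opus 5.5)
Write the coarse step as $s_k\coloneqq x_{k+1}-x_k=-P h_k$ with $h_k\coloneqq (R\Hk P+a_k I_d)^{-1}R\gk$ and $a_k\coloneqq\sqrt{\omega^3L\norm{R\gk}/2}$. The plan is to use the integral form of the gradient, $\gkp=\gk+\int_0^1\hess f(x_k+t s_k)s_k\,\dd t$, and the triangle inequality:
\[
\norm{\gkp}\le\norm{\gk}+\sup_{t\in[0,1]}\norm{\hess f(x_k+ts_k)}\,\norm{s_k}.
\]

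First, bound the Hessian factor by $H$. This needs the segment $[x_k,x_{k+1}]$ to lie in $\mathcal L(x_0)$. By \Cref{cor:f-nonincreasing}, $f(x_k)\le f(x_0)$ and $f(x_{k+1})\le f(x_0)$. Since $f$ is convex, $f(x_k+ts_k)\le\max\{f(x_k),f(x_{k+1})\}\le f(x_0)$, so every point of the segment is in $\mathcal L(x_0)$. Hence the supremum is at most $H$ from \eqref{eq:H-and-D}.

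Second, bound $\norm{s_k}$. Convexity gives $R\Hk P=R\Hk R^\top\succeq 0$, so the smallest eigenvalue of $R\Hk P+a_kI_d$ is at least $a_k$. Therefore $\norm{h_k}\le\norm{R\gk}/a_k$, and
\[
\norm{s_k}\le\omega\norm{h_k}\le\frac{\omega\norm{R\gk}}{\sqrt{\omega^3L\norm{R\gk}/2}}=\sqrt{\frac{2}{\omega L}}\,\norm{R\gk}^{1/2}.
\]
Write $\norm{R\gk}^{1/2}=\norm{R\gk}/\norm{R\gk}^{1/2}$. Since $k\in\C$, we have $\norm{R\gk}\ge\varepsilon$, so $\norm{R\gk}^{1/2}\ge\sqrt\varepsilon$. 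Also $\norm{R\gk}\le\omega\norm{\gk}$. Together these give
\[
\norm{s_k}\le\sqrt{\frac{2}{\omega L}}\cdot\frac{\omega\norm{\gk}}{\sqrt\varepsilon}=\frac{\sqrt{2\omega}}{\sqrt{L\varepsilon}}\norm{\gk}.
\]
Substituting into the first display yields $\norm{\gkp}\le\bigl(1+H\sqrt{2\omega}/\sqrt{L\varepsilon}\bigr)\norm{\gk}=M_\C\norm{\gk}$.

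The step that needs most care is the first, placing the whole segment in $\mathcal L(x_0)$ so that $H$ applies. Convexity of $f$ together with the monotonicity from \Cref{lemma:f-non-increasing} handles it directly. The remaining work is bookkeeping. The eligibility threshold $\varepsilon$ is what turns the square-root dependence of the step length into a bound linear in $\norm{\gk}$, which is why $\varepsilon$ appears in $M_\C$.
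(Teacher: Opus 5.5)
Your proof is correct and follows the paper's argument essentially step for step. You use the integral form of $\gkp-\gk$, place the segment in $\mathcal L(x_0)$ via convexity and \Cref{cor:f-nonincreasing} to get the factor $H$, bound $\norm{x_{k+1}-x_k}\le\omega\norm{R\gk}/a_k$ using $R\Hk P\succeq 0$, and then apply $\norm{R\gk}\ge\varepsilon$ and $\norm{R\gk}\le\omega\norm{\gk}$; the only difference is cosmetic (you rewrite $\norm{R\gk}^{1/2}$ as a quotient rather than lower-bounding $a_k$ directly), and it gives the same constant $M_\C$.
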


\begin{proof}
  Denote 
  \begin{equation*}
      \alpha_k \coloneq \sqrt{\frac{\omega^3 L \norm{R \gk}}{2}}.
  \end{equation*}
  Next, we have 
  \[
\nabla f(x_{k+1})-\nabla f(x_k)
=
\int_0^1
\nabla^2 f\bigl(x_k+t(x_{k+1}-x_k)\bigr)
(x_{k+1}-x_k)\,\dd t.
\]
By \Cref{cor:f-nonincreasing}, \(x_k,x_{k+1}\in\mathcal L(x_0)\). Since \(f\) is convex, \(\mathcal L(x_0)\) is convex, and hence \(x_k+t(x_{k+1}-x_k)\in\mathcal L(x_0)\) for every \(t\in[0,1]\). Therefore,  
\begin{equation}\label{eq:boundgrad}
\|\nabla f(x_{k+1})-\nabla f(x_k)\|
\le H \|x_{k+1}-x_k\|.
\end{equation}
Then, by using the definitions of $\omega$ and $\alpha_k$ as well as  $k \in \C$ and the symmetric positive semi-definiteness of $R \Hk P$, we obtain
  \begin{align*}
    \norm{\gkp} & \leq \norm{\gkp - \gk} + \norm{\gk} \\
    & \leq H \norm{P \left(R \Hk P + \sqrt{\frac{\omega^3 L \norm{R \gk}}{2}} I_d\right)^{-1} R \gk} + \norm{\gk} \\ 
    &\leq H \omega \norm{\left(R \Hk P + \sqrt{\frac{\omega^3 L \norm{R \gk}}{2}} I_d\right)^{-1} R \gk} + \norm{\gk} \\
    & \leq H \omega \frac{\norm{R \gk}}{\alpha_k} + \norm{\gk} \\
    & \leq H \omega \frac{\sqrt{2}}{\sqrt{\omega^3 L \varepsilon}} \norm{R \gk} + \norm{\gk} \\
    & \leq \left(1 + \frac{H \sqrt{2 \omega}}{\sqrt{L \varepsilon}}\right) \norm{\gk}.
\end{align*}
\end{proof}

\Cref{lemma:bounded-gradient-in-coarse-level} shows that a coarse correction is stable with respect to the full gradient, in the sense that the gradient norm can increase by at most a uniformly bounded factor. Together with the level-specific stability estimates \eqref{ineq:reduction-coarse} and \eqref{ineq:reduction-fine}, this bound will allow us to construct a level-dependent weighted gradient measure that controls transitions between the two levels.

\begin{theorem} \label{thm:two-level-mixed}
  Let $f$ be convex and suppose that Assumptions  \ref{ass:bounded-sublevel-sets}, \ref{ass:lipschitz-hessians}, and~\ref{ass:R} hold. Then the sequence generated by Algorithm~\ref{alg:two-level} satisfies
\[
    f(x_k)-f^* = \mathcal{O} \left(\frac{1}{k^2}\right).
\]

\end{theorem}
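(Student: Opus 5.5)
We plan to adapt the two-case argument of \Cref{prop:rate-coarse-steps}, but now tracking a level-dependent weighted gradient measure instead of $\|R\nabla f_k\|$. Define
\[
  g_k \coloneqq \begin{cases} \|R\nabla f_k\|, & k\in\C,\\ \sigma\|\nabla f_k\|/M, & k\in\F,\end{cases}
\]
with a weight $M\ge 1$ still to be fixed. The goal is a uniform transition bound $g_{k+1}\le c\, g_k$ with a constant $c$ independent of the level types of $k$ and $k+1$. We also need two lower bounds. First, on every iteration, $g_k\ge (\sigma/M)\|\nabla f_k\|$. This holds trivially on $\F$, and on $\C$ it follows from $\|R\nabla f_k\|\ge\sigma\|\nabla f_k\|$. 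Second, $g_k\le \max\{\omega,1\}\|\nabla f_k\|$.

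\textbf{Transition estimates.} There are four cases.
\begin{itemize}
\item Coarse$\to$coarse: \eqref{ineq:reduced-grad-bound} gives $g_{k+1}\le 2g_k$.
\item Fine$\to$fine: \eqref{ineq:full-grad-bound} gives $g_{k+1}\le 2g_k$.
\item Coarse$\to$fine: \Cref{lemma:bounded-gradient-in-coarse-level} together with $\|\nabla f_k\|\le \|R\nabla f_k\|/\sigma$ gives $g_{k+1}=\sigma\|\nabla f_{k+1}\|/M\le (M_\C/M)\,g_k$.
\item Fine$\to$coarse: $g_{k+1}=\|R\nabla f_{k+1}\|\le 2\omega\|\nabla f_k\|=(2\omega M/\sigma)\,g_k$.
\end{itemize}
Choosing $M=M_\C$ yields the uniform bound $g_{k+1}\le c\,g_k$ with $c\coloneqq\max\{2,\,2\omega M_\C/\sigma\}$.

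\textbf{Decrease in terms of $g$.} For a coarse step, \eqref{ineq:reduction-coarse} gives
\[
f(x_k)-f(x_{k+1})\ge \frac{\|R\nabla f_{k+1}\|^2}{8(\omega^3L g_k/2)^{1/2}}.
\]
For a fine step, \eqref{ineq:reduction-fine} gives the analogous bound with $\|\nabla f_{k+1}\|$ in the numerator and $g_k$ replaced by $\|\nabla f_k\|=Mg_k/\sigma$. In both cases the numerator dominates a constant times $g_{k+1}^2$ only when $k+1$ has the same level type as $k$. This is the main obstacle. If the level changes, say coarse$\to$fine, we would have to bound $\|R\nabla f_{k+1}\|$ below by $\|\nabla f_{k+1}\|$, which fails precisely because $k+1\in\F$.

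We handle this using monotonicity of $f$. Whenever the level changes, $k+1\in\F$ and the numerator is $\|\nabla f_{k+1}\|$. Since $\|R\nabla f_{k+1}\|\le\omega\|\nabla f_{k+1}\|$, and $g_{k+1}$ is proportional to $\|\nabla f_{k+1}\|$ or $\|R\nabla f_{k+1}\|$, we always have $\|\nabla f_{k+1}\|\ge g_{k+1}/\max\{\omega,1\}$ in the fine-numerator cases. For a coarse step landing at $k+1\in\F$, we instead use the fine step taken at $k+1$ itself: it decreases $f$ by at least a constant times $\|\nabla f_{k+2}\|^2/\|\nabla f_{k+1}\|^{1/2}$. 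To avoid this chain, a cleaner route is to define
\[
I^k\coloneqq\{m<k : g_{m+1}>\tfrac14 g_m,\ m+1 \text{ of the same type as } m\},
\]
and to treat transition indices separately.

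\textbf{Counting argument.} The number of level switches is bounded as follows. A fine$\to$coarse switch at $m+1$ requires $\|R\nabla f_{m+1}\|\ge\varepsilon$. Using $\|R\nabla f_j\|\le \omega\|\nabla f_j\|$ and convergence of gradients, there are only finitely many such switches once $\|\nabla f\|<\varepsilon/\omega$; more precisely, switches occur in pairs, so their number is controlled. If the pairing bound is too weak, we instead absorb switch indices into the ``bad'' set, at a cost of factor $c$ in the geometric product.

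With the counting in hand, we run the two cases of \Cref{prop:rate-coarse-steps}. If at most $k/2$ indices are non-contracting, then $g_k\le c^{k/2}4^{-k/2}g_0$. This is geometric provided $c<4$, which may fail, so the threshold must be adjusted: replace $\tfrac14$ by $\tfrac1{2c}$ and the fraction $k/2$ by $k/3$. Then $f(x_k)-f^*\le D\|\nabla f_k\|\le (DM/\sigma)g_k$ decays geometrically. Otherwise, at least proportionally many indices give
\[
f(x_{k_\ell})-f(x_{k_\ell+1})\ge C\,g_{k_\ell}^{3/2}\ge C'\bigl(f(x_{k_\ell})-f^*\bigr)^{3/2}
\]
by convexity. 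Applying \Cref{prop:recurrence-rate} then yields $\mathcal O(k^{-2})$.
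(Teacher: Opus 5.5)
Your proposal does not close, and the gap is exactly the obstacle you flagged: coarse$\to$fine switch indices.

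With $M=M_\C$, across such a switch you only get $g_{m+1}\le (M_\C/M)\,g_m=g_m$. Your adjusted threshold cannot repair this, because $\frac{1}{2c}\le\frac{\sigma}{4\omega M}<\frac{M_\C}{M}$ for every $M$. So a switch with $m\in\C$, $m+1\in\F$ can satisfy $g_{m+1}>\frac1{2c}g_m$ while \eqref{ineq:reduction-coarse} gives nothing. The reason is that its numerator $\norm{R\grad f_{m+1}}$ is not bounded below by $g_{m+1}$ once $m+1\in\F$.

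This breaks both of your remedies:
\begin{itemize}
\item Absorbing switches into the bad set breaks Case~II whenever switches make up a proportional fraction of $\{0,\dots,k-1\}$, since no $3/2$-power decrease is available at them. Nothing in the algorithm excludes such alternation during the transient phase.
\item The remark that switches ``occur in pairs'' gives no bound of the form $\theta k$.
\end{itemize}
The only available control is eventual finiteness. That needs \Cref{lemma:finite-coarse}, not an unproven convergence of gradients. It also gives a constant depending on the unknown number of coarse steps. At that point the weighted measure is superfluous anyway, since after the last coarse step the method is plain RN.

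The missing idea has two parts: cross-level transitions must contract through the choice of weights, and the exceptional set must be defined by the quantity each step actually controls.

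\emph{Weights.} In your normalization, the paper takes $M\in(M_\C,\sigma/(\rho\omega))$ strictly, with $\rho<\sigma/(\omega M_\C)$. Then:
\begin{itemize}
\item Every coarse$\to$fine transition contracts by $M_\C/M<1$. This uses only \Cref{lemma:bounded-gradient-in-coarse-level} and $\norm{R\grad f_m}\ge\sigma\norm{\grad f_m}$, so no decrease estimate is needed there.
\item A fine$\to$coarse transition contracts by $\omega\rho M/\sigma<1$ whenever $\norm{\grad f_{m+1}}\le\rho\norm{\grad f_m}$.
\end{itemize}

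\emph{Exceptional set.} It is $\{m\in\C:\norm{R\grad f_{m+1}}>\rho\norm{R\grad f_m}\}\cup\{m\in\F:\norm{\grad f_{m+1}}>\rho\norm{\grad f_m}\}$, regardless of the level of $m+1$. This is precisely what the numerators of \eqref{ineq:reduction-coarse} and \eqref{ineq:reduction-fine} require. Hence every index either contracts the measure by a fixed $q<1$ or gives $f(x_m)-f(x_{m+1})\ge c'\norm{\grad f_m}^{3/2}$.

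\emph{Closing the argument.} Growth on exceptional indices is bounded by $B=\max\{2,2\omega M/\sigma\}$. Choosing $\theta<\ln(1/q)/(\ln B-\ln q)$, instead of fixing the threshold at $\frac1{2c}$ and the fraction at $k/3$, closes both cases without any switch counting.

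Equivalently, within your own setup: take $M>M_\C$ and a threshold $q\in[M_\C/M,1)$ for $g$. Coarse$\to$fine indices are then never exceptional. Exceptional fine$\to$coarse indices satisfy $\norm{\grad f_{m+1}}>\frac{q\sigma}{\omega M}\norm{\grad f_m}$, which feeds directly into \eqref{ineq:reduction-fine}.
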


  \begin{proof}
    To compare the reduced and full gradient norms across changes of level, we introduce a level-dependent weighted gradient measure. The weights are chosen so that all four possible coarse/fine transitions admit uniform contraction or stability bounds. Choose 
    \begin{equation*}
      a > 0,
      \qquad
    \rho\in\left(0,\frac{\sigma}{\omega M_C}\right),
    \qquad
    t \in \left(\rho\omega ,\frac{\sigma}{M_C}\right),
    \end{equation*}
    and set $b \coloneqq a t$.
    Since $\|R\|\|\gk\| \ge \|R\gk\| \ge \sigma \|\gk\|$ implies $\omega \ge \sigma$, we get
    \begin{equation*}
      q \coloneqq \max \left\lbrace \rho, \frac{t M_\C}{\sigma}, \frac{\omega \rho}{t} \right\rbrace < 1.
    \end{equation*}
    Moreover, define the scalar
    \begin{equation*}
      B \coloneqq \max \left\lbrace 2, \frac{2\omega}{t}, \frac{t M_\C}{\sigma} \right\rbrace,
    \end{equation*}
    and the sets
    \begin{align*}
\mathcal C^k & \coloneqq\mathcal C\cap\{0,\ldots,k-1\},
\qquad
\mathcal F^k\coloneqq\mathcal F\cap\{0,\ldots,k-1\},\\
    \mathcal I_{\C}^k(\rho)
    &\coloneqq
    \left\{
    m\in\C\cap\{0,\ldots,k-1\} \, : \,
    \norm{R\nabla f_{m+1}}>
    \rho\norm{R\nabla f_m}
    \right\},\\
    \mathcal I_{\F}^k(\rho)
    &\coloneqq
    \left\{
    m\in\F\cap\{0,\ldots,k-1\} \, : \, 
    \norm{\nabla f_{m+1}}>
    \rho\norm{\nabla f_m}
    \right\}.
    \end{align*}
    We set
    \[
        \mathcal I^k(\rho)
        \coloneqq
        \mathcal I_{\C}^k(\rho)
        \cup
        \mathcal I_{\F}^k(\rho).
    \]
    Now, choose \(
\theta\in\left(0,\frac{-\ln q}{\ln B-\ln q}\right)
\) and fix a $k \in \N$. We distinguish two cases. 

    \textbf{Case I:} $\big|\I^k(\rho)\big| \leq \theta k$. Define
    \begin{equation*}
        V_k \coloneqq 
        \begin{cases}
        a\|R \gk\|, & k \in \C,\\
        b\|\gk \|,& k\in \F,
        \end{cases}
    \end{equation*}
    For \(m\in (\mathcal I^k(\rho))^{\rm c} \coloneqq \{0,\ldots, k-1 \} \setminus \I^k(\rho) =    (\C^k \setminus \I_\C^k (\rho)) \cup (\F^k \setminus \I_\F^k(\rho))\), we show that \(V_{m+1}\le qV_m\).
    Let  $m \in \I^k(\rho)^{\rm c}$. Then, for $m$ and $m+1$ we have four possibilities:
    \begin{enumerate}[label=(\alph*)]
    \item $m \in \C^k \setminus \I_\C^k (\rho), \ m+1 \in \C$. Then,
    \begin{equation*}
      V_{m+1} \overset{m+1 \in \C}{=} a \norm{R \grad f_{m+1}} \overset{m \in \C \setminus \I_\C^k(\rho)}{\leq} a \rho \norm{R \grad f_m} \overset{m \in \C}{=} \rho V_m.
    \end{equation*}
    \item $m \in \C^k \setminus \I_\C^k (\rho), \ m+1 \in \F$. Then, with similar arguments we obtain
    \begin{align*}
    V_{m+1} &\overset{m+1 \in \F}{=} b \norm{\grad f_{m+1}}  \overset{\text{Lem.}~\ref{lemma:bounded-gradient-in-coarse-level}}{\leq} b M_\C \norm{\grad f_{m}}  \\ &\quad \  {\leq} \frac{b M_\C}{\sigma} \norm{R\grad f_{m}} \overset{m \in \C}{\leq} \frac{t M_\C}{\sigma} V_m.
    \end{align*}
    \item $m \in \F^k \setminus \I_\F^k (\rho), \ m+1 \in \F$. Then, 
    \begin{equation*}
      V_{m+1} \overset{m+1 \in \F}{=}  b \norm{\grad f_{m+1}} \overset{m \in \F \setminus \I_\F^k (\rho)}{\leq} b \rho \norm{\grad f_{m}} \overset{m \in \F}{=} \rho V_m.
    \end{equation*}
        \item $m \in \F^k \setminus \I_\F^k (\rho), \ m+1 \in \C$. Then,
        \begin{equation*}
          V_{m+1} \overset{m+1 \in \C}{=} a \norm{R \grad f_{m+1}} \leq a \omega \norm{\grad f_{m+1}} \overset{m \in \F \setminus \I_\F^k (\rho)}{\leq} a \omega \rho \norm{\grad f_{m}} \overset{m \in \F}{=} \frac{\omega \rho}{t} V_m.
        \end{equation*}
\end{enumerate}
Hence, 
\begin{equation}\label{ineq:V_m-contraction}
  V_{m+1} \leq q V_m \ \ \text{for any} \ \ m \in \I^k(\rho)^{\rm c}.
\end{equation}
Now let $m \in \I^k(\rho)$. We again consider four cases as above:
\begin{enumerate}[label=(\alph*)]
\item $m \in \I_\C^k (\rho), \ m+1 \in \C$. Then,
\begin{equation*}
  V_{m+1} = a \norm{R \grad f_{m+1}} \overset{\eqref{ineq:reduced-grad-bound}}{\le} 2 V_m.
\end{equation*}
\item $m \in \I_\C^k (\rho), \ m+1 \in \F$. Then, 
\begin{equation*}
V_{m+1} \leq \frac{t M_\C}{\sigma} V_m.
\end{equation*}
    \item $m \in \I_\F^k (\rho), \ m+1 \in \F$. Then,
    \begin{equation*}
        V_{m+1} = b \norm{ \grad f_{m+1}} \overset{\eqref{ineq:full-grad-bound}}{\le} 2 V_m.
    \end{equation*}
\item $m \in \I_\F^k (\rho), \ m+1 \in \C$. Then, 
\begin{equation*}
V_{m+1} \leq \frac{2 \omega}{t} V_m.  
\end{equation*}
\end{enumerate}
 As a result,
 \begin{equation} \label{ineq:V_m-I-recurrent-ineq}
  V_{m+1} \leq B V_m \ \ \text{for any} \ \ m \in \I^k(\rho).
 \end{equation}
 Let $\eta \coloneqq q^{1-\theta} B^\theta$. Thus, we have
 \begin{equation*}
 \ln\big(q^{1-\theta}B^\theta\big) = (1-\theta) \ln q + \theta \ln B = \ln q +  \theta (\ln B- \ln q),
 \end{equation*}
 and hence, by the definition of $\theta$, $\eta < 1$.
Combining now \eqref{ineq:V_m-contraction} and \eqref{ineq:V_m-I-recurrent-ineq} we obtain 
\begin{align}\label{ineq:V_k-final-bound}
  V_k \leq q^{k - \left|\I^k(\rho)\right|} B^{\left|\I^k(\rho)\right|} V_0 \leq \eta^k V_0.
\end{align}
Furthermore, for any $k \in \N$, we have that
\begin{equation} \label{ineq:grad-V_k-relation}
  \norm{\gk} \leq A V_k, \quad  A \coloneqq \max \left\lbrace  \frac{1}{a \sigma}, \frac{1}{b}  \right\rbrace.
\end{equation}
Then, by convexity and \Cref{ass:bounded-sublevel-sets}, we obtain
\begin{equation}
  f(x_k) - f^* \leq D \norm{\gk} \overset{\eqref{ineq:V_k-final-bound}, \eqref{ineq:grad-V_k-relation}}{\leq} \eta^k A D V_0 
  = \mathcal{O} \left(\frac{1}{k^2}\right),
\end{equation}
and thus the desired rate of convergence is established when $\big|\I^k(\rho)\big| \leq \theta k$.

    \textbf{Case II:} $\big|\I^k(\rho)\big| > \theta k$. Let $\tau \coloneqq \big|\I^k(\rho)\big|$ and enumerate $\I^k(\rho) = \{k_0, \ldots k_{\tau-1}\}$, with $
k_0<k_1<\cdots<k_{\tau-1} < k, \  k_\tau \coloneqq k$.
    Let $\ell \in \{0, \ldots, \tau - 1\}$. Thus, $k_\ell \in \I^k(\rho)$. If $k_\ell \in \I^k_\C(\rho)$, then
    \begin{align*}
    f(x_{k_\ell}) - f(x_{k_\ell + 1}) & \overset{\eqref{ineq:reduction-coarse}}{\geq} \frac{\|R \nabla f_{k_\ell + 1}\|^2}{8 \left( \omega^3 L / 2 \|R \nabla f_{k_\ell}\|\right)^{1/2}} \overset{k_\ell \in \I^k_\C(\rho)}{\geq} \frac{ \rho^2 \|R \nabla f_{k_\ell}\|^{3/2}}{8 \left( \omega^3 L / 2 \right)^{1/2}} \\ 
    & \overset{k_\ell \in \C}{\geq}  \left(\frac{\sigma}{\omega}\right)^{3/2}\frac{\rho^2 \| \nabla f_{k_\ell}\|^{3/2}}{8  \sqrt{L / 2} }.
    \end{align*}
    Using similar arguments, if $k_\ell \in \I^k_\F(\rho)$, then
    \begin{align*}
    f(x_{k_\ell}) - f(x_{k_\ell + 1}) \geq \frac{\rho^2 \norm{\gkl}^{3/2}}{6 \sqrt{L / 2}},
    \end{align*}
    and thus, since $\sigma \leq \omega$, we get
    \begin{align*}
      f(x_{k_\ell}) - f(x_{k_\ell + 1})  & \geq \min \left\lbrace  \left(\frac{\sigma}{\omega}\right)^{3/2}\frac{\rho^2 }{8  \sqrt{L / 2}  }, \frac{\rho^2 }{6 \sqrt{L / 2}} \right\rbrace \| \nabla f_{k_\ell}\|^{3/2} \\ 
      & = \left(\frac{\sigma}{\omega}\right)^{3/2}\frac{\rho^2 \| \nabla f_{k_\ell}\|^{3/2}}{8  \sqrt{L / 2} }
    \end{align*} 
    for all $\ell \in \{0, \ldots, \tau - 1\}$.
    Since \(I^k(\rho)\subseteq\{0,\ldots,k-1\}\), it follows that
\(
k_\ell+1\le k_{\ell+1}\) for \( \ell=0,\ldots,\tau-1.
\)
Following the same argument as in Case~II of \Cref{prop:rate-coarse-steps}, monotonicity of the function values and convexity yield
\[
    \Delta_{\ell+1}
    \leq
    \Delta_\ell-C\Delta_\ell^{3/2},
    \qquad
    \Delta_\ell\coloneqq f(x_{k_\ell})-f^*,
\]
where
\[
    C\coloneqq
    \frac{\sqrt{2}\rho^2}{8\sqrt{L}}
    \left(\frac{\sigma}{\omega D}\right)^{3/2}.
\]
Hence, \Cref{prop:recurrence-rate} and
$\tau > \theta k$ give
\[
    f(x_k)-f^*
    =\Delta_\tau
    \leq
    \frac{1}{C^2(1+\tau/2)^2}
    \leq
    \frac{1}{C^2(1+\theta k/2)^2}
    =
    \mathcal{O}\big(k^{-2}\big),
\]
which completes the proof of Case II.

Combining now the results of the two cases we have
\[
f(x_k)-f^* \leq \max \left\lbrace ADV_0 \eta^k,   \dfrac{1}{C^2(1+\theta k/2)^2} \right\rbrace =
    \mathcal{O}\big(k^{-2}\big),
\]
as required.
 \end{proof}

\Cref{thm:two-level-mixed} shows that the use of coarse corrections does not deteriorate the global
convergence rate of the regularized Newton method. In particular, Algorithm~\ref{alg:two-level}
achieves the $\mathcal{O}\big(k^{-2}\big)$ rate in function values while allowing arbitrary switching
between coarse and fine iterations according to the level-selection criterion. A key feature of the
analysis is that coarse and fine iterations naturally control different quantities, namely
$\|R\nabla f_k\|$ and $\|\nabla f_k\|$, respectively. The proof resolves this mismatch through a
level-dependent weighted gradient measure that provides uniform control across all possible
coarse/fine transitions: it contracts on iterations outside $\mathcal I^k(\rho)$, while its growth
on the remaining iterations is uniformly bounded. Consequently, the analysis does not require the
full gradient norm to decrease at every coarse iteration; uniform stability of the coarse
corrections is sufficient. This provides a mechanism for preserving convergence rates in multilevel
schemes even when different iteration types control different measures of stationarity.

\section{The multilevel Newton method with gradient regularization}  \label{sec:multilevel}

In this section we consider the gradient regularization of Newton's method that alternates between $\ell$ coarse levels (plus the fine level) when generating the updates. In particular, we consider $\ell$ restriction operators as in \cite{gratton2008recursive}, but unlike their recursive framework, our operators map directly between each coarse level and the full space, allowing coarse corrections to be transferred directly to the fine level. More precisely, for each $i \in \mathcal{J} \coloneqq \{1, \ldots, \ell \}$, let $R_i : \R^n \to \R^{n_i}$ be a restriction operator, where $n_1 < \cdots < n_\ell < n$. Further, $P_i \coloneqq R^\top_i$ is the prolongation operator at level $i \in \mathcal{J}$. We will see that this nonrecursive formulation simplifies both the algorithm and its convergence analysis while retaining the same $\Op\big(n_i^2+n_i^3\big)$ computational cost at the $i$th coarse level.

Accordingly, \Cref{ass:R} is now replaced by the following assumption.
\begin{assumption}\label{ass:R_i}
For each $i\in\J$, let $R_i\in\R^{n_i\times n}$ be a full-row-rank restriction operator, with $n_i\leq n$, and let $P_i=R_i^\top$ be its corresponding prolongation operator. We define
\[
    \omega_i\coloneqq\norm{R_i},
    \qquad
    \zeta\coloneqq\max_{i\in\J}\omega_i.
\]
\end{assumption}
Let $ \sigma\in \left(0, \min\left\{1,\min_{i\in\mathcal J}\omega_i\right\}\right)$ and $\varepsilon > 0$. For each $i \in \J$, the sets of coarse and fine iterations are now defined as follows. First define the eligible sets
\[
\mathcal E_i \coloneqq \left\{ k\in\mathbb N \, : \, \|R_i\nabla f_k\|\ge\sigma\|\nabla f_k\|, \, \|R_i\nabla f_k\|\ge \varepsilon \right\}.
\]
The set \(\mathcal E_i\) contains the iterations for which level \(i\) is eligible. For the multilevel method to become efficient, we need to select the lowest-dimensional eligible level
\begin{align*}
  \C_i \coloneq \mathcal{E}_i \setminus \bigcup_{j < i} \mathcal{E}_j, \qquad i \in \J. 
\end{align*}
Then the sets $\mathcal C_i$ are pairwise disjoint and, whenever
$\{i\in\mathcal J \, : \, k\in\mathcal E_i\}\neq\emptyset$,
\[
k\in\mathcal C_i
\quad\Longleftrightarrow\quad
i=i^*(k)\coloneqq
\min\{j\in\mathcal J \, : \, k\in\mathcal E_j\}.
\]
Moreover, we set 
\begin{equation*}
  \F^{(\ell)} \coloneqq \N \setminus \C^{(\ell)}, \qquad \C^{(\ell)} \coloneqq \bigcup_{i \in \J} \C_i.
\end{equation*} 
The multilevel gradient-regularized Newton method is presented in \Cref{alg:multilevel}. 

\begin{algorithm}[tb]
\caption{MultiLevel Regularized Newton (MLRN)}
\label{alg:multilevel}
\begin{algorithmic}
\REQUIRE $x_0 \in \R^n, \ell \in \N, R_i : \R^n \to \R^{n_i}$ for $i \in \J$, $\sigma\in \left(0, \min\left\{1,\min_{i\in\mathcal J}\omega_i\right\}\right), \varepsilon > 0$. 
\ENSURE Sequence of iterates $(x_0,x_1,x_2,\ldots)$.
\FOR{$k = 0, 1,  \ldots$}
\IF{$\nabla f_k = 0$} \STATE \textbf{stop} \ENDIF
\STATE Set \begin{equation*}
x_{k+1} \coloneqq
\begin{cases}
x_k - P_i
\left(
R_i \Hk P_i
+
\sqrt{\dfrac{\omega_i^3 L \norm{R_i \gk}}{2}}\, I_{n_i}
\right)^{-1}
R_i \gk,
& k \in \C_i, i\in\J, \\[2ex]
x_k -
\left(
\Hk
+
\sqrt{\dfrac{L\norm{\gk}}{2}}\, I_n
\right)^{-1}
\gk,
& k \in \F^{(\ell)}.
\end{cases}
\end{equation*}
\ENDFOR
\end{algorithmic}
\end{algorithm}

The next lemma shows that \Cref{alg:multilevel} generates a non-increasing sequence of function values.
It also shows that it guarantees the reduced-gradient stability of a coarse correction step. These
results arise by \cite{tsipinakis2025regularized}.

\begin{lemma}
\label{lemma:multilevel-coarse-estimates}
Suppose that Assumptions~\ref{ass:lipschitz-hessians} and
\ref{ass:R_i} hold, and let ${(x_k)}_{k\in\N}$ be generated by
\Cref{alg:multilevel}. Then, for any $k\in\C_i$, $i\in\J$,
\begin{align}
f(x_k)-f(x_{k+1})
&\geq
\frac{\|R_i\nabla f_{k+1}\|^2}
{8\sqrt{\omega_i^3L\|R_i\nabla f_k\|/2}},
\label{ineq:multilevel-reduction}\\
\|R_i\nabla f_{k+1}\|
&\leq
2\|R_i\nabla f_k\|.
\label{ineq:multilevel-reduced-gradient-stability}
\end{align}
\end{lemma}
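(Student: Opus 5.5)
The plan is to reduce \Cref{lemma:multilevel-coarse-estimates} to the two-level estimates \eqref{ineq:reduction-coarse} and \eqref{ineq:reduced-grad-bound}, which were stated for a single operator $R$ satisfying \Cref{ass:R}. Fix $i\in\J$ and $k\in\C_i$. By construction of \Cref{alg:multilevel}, the update at iteration $k$ is
\[
x_{k+1}=x_k-P_i\Bigl(R_i\Hk P_i+\sqrt{\omega_i^3L\norm{R_i\gk}/2}\,I_{n_i}\Bigr)^{-1}R_i\gk .
\]
This is exactly the coarse update of \Cref{alg:two-level} with $R\coloneqq R_i$, $d\coloneqq n_i$, $\omega\coloneqq\omega_i$, and $P=R_i^\top$. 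By \Cref{ass:R_i}, $R_i$ is full row rank and $\omega_i=\norm{R_i}$, so \Cref{ass:R} holds for this choice.

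The update is well-defined because $f$ is convex. Convexity makes $R_i\Hk P_i$ symmetric positive semidefinite. Since $k\in\C_i\subseteq\mathcal E_i$, we have $\norm{R_i\gk}\ge\varepsilon>0$, so the regularization parameter is strictly positive and the shifted matrix is invertible.

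With these identifications, \Cref{lemma:f-non-increasing} applied to the single step $x_k\mapsto x_{k+1}$ gives \eqref{ineq:multilevel-reduction} directly. That lemma is a statement about one step from an arbitrary point $x_k$, and it does not depend on how $x_k$ was generated. Likewise, the one-step stability estimate \eqref{ineq:reduced-grad-bound}, quoted from \cite{tsipinakis2025regularized,mishchenko2023regularized} as valid for every coarse update, yields \eqref{ineq:multilevel-reduced-gradient-stability}.

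The only point needing care is that these cited results are genuinely per-step. They must not rely on the two-level set $\C$ or on the condition $\norm{R\gk}\ge\sigma\norm{\gk}$ beyond well-definedness. If one prefers not to rely on this, the fallback is to reprove both estimates directly, which I expect to be the main obstacle. Write $h=-P_i s$ with $(R_i\Hk P_i+\alpha I)s=R_i\gk$ and $\alpha=\sqrt{\omega_i^3L\norm{R_i\gk}/2}$. Then:
\begin{itemize}
\item Taylor's bound gives $\norm{\grad f_{k+1}-\gk-\Hk h}\le \tfrac L2\norm{h}^2$.
\item Applying $R_i$ gives $\norm{R_i\grad f_{k+1}+\alpha s}\le \tfrac{L\omega_i^2}{2}\norm{s}^2$, using $\norm{h}\le\omega_i\norm{s}$.
\item Monotonicity gives $\norm{s}\le\norm{R_i\gk}/\alpha$.
\end{itemize}
Combining these yields $\norm{R_i\grad f_{k+1}}\le\alpha\norm{s}+\tfrac{L\omega_i^2}{2}\norm{s}^2\le 2\norm{R_i\gk}$, since $\tfrac{L\omega_i^2}{2}\norm{R_i\gk}^2/\alpha^2=1/\omega_i\cdot$const. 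The choice of $\alpha$ with the $\omega_i^3$ factor is precisely what balances this term.

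The function decrease follows the same pattern. The cubic upper bound $f(x_{k+1})\le f(x_k)+\langle\gk,h\rangle+\tfrac12\langle\Hk h,h\rangle+\tfrac L6\norm{h}^3$ is combined with the identity $\langle\gk,h\rangle=-\langle R_i\gk,s\rangle$. The result is a decrease of order $\alpha\norm{s}^2$, and the bound on $\norm{R_i\grad f_{k+1}}$ in terms of $\alpha\norm{s}$ then converts this into \eqref{ineq:multilevel-reduction}, as in \cite[Lemma~3.3]{tsipinakis2025regularized}.
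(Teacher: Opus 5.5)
Your proposal is correct and follows essentially the paper's proof, which likewise treats a step $k\in\mathcal{C}_i$ as a single coarse step with $R=R_i$, $\omega=\omega_i$ and invokes the per-step estimates of \cite[Lemma~3.2, Corollary~3.4]{tsipinakis2025regularized}, namely $\|R_i\nabla f_{k+1}\|\le 2\alpha_k\|d_{i,k}\|$ and $\|d_{i,k}\|\le\|R_i\nabla f_k\|/\alpha_k$, to obtain both inequalities. There is one slip in your fallback derivation: applying $R_i$ to the Taylor remainder costs an extra factor $\omega_i$, so the correct bound is $\|R_i\nabla f_{k+1}-\alpha s\|\le\frac{L\omega_i^3}{2}\|s\|^2$ (note also the sign); with this, $\frac{L\omega_i^3}{2}\|s\|^2\le\alpha\|s\|$ holds exactly by the choice of $\alpha$, giving $\|R_i\nabla f_{k+1}\|\le 2\alpha\|s\|\le 2\|R_i\nabla f_k\|$ for every value of $\omega_i$.
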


\begin{proof}
The proof is outlined in \Cref{sec:multilevel-coarse-estimates}.
\end{proof}

\Cref{lemma:multilevel-coarse-estimates} guarantees that \( f(x_{k+1})\le f(x_k), \  k\in \mathcal C_i \), and by \eqref{ineq:full-grad-bound}, ${(f(x_k))}_{k \in \N}$ is non-increasing. In particular, this means that the multilevel iterates stay in $\mathcal{L}(x_0)$.
Finally, in \Cref{lemma:finite-coarse} we show that \Cref{alg:multilevel} performs only finitely many coarse iterations.

\subsection{Conditional rate of convergence} \label{sec:conditional-rate}

Let $\rho \in (0,1)$. Similar to the analysis of the two-level method, we will need the following definitions,
\begin{align*}
\C_i^k  &\coloneqq \C_i \cap \{0,\ldots ,k-1\},
\ \  \C^{(\ell), k} \coloneqq \bigcup_{i \in \J} \C_i^k, \\
\mathcal F^{(\ell), k} &\coloneqq \mathcal F^{(\ell)} \cap \{0,\ldots,k-1\}, \\
    \mathcal I_{\C_i}^k(\rho) & \coloneqq \left\{ m\in\C^k_i \, : \, \norm{R_i \nabla f_{m+1}}> \rho\norm{R_i \nabla f_m} \right\}, \\
    \mathcal I_{\F^{(\ell)}}^k(\rho) & \coloneqq \left\{ m\in\F^{(\ell), k} \, : \, \norm{\nabla f_{m+1}}> \rho\norm{\nabla f_m}
    \right\},
    \end{align*}
    where $i \in \J$. We further set
    \[
        \I_\ell^k(\rho) \coloneqq \left( \bigcup_{i \in \J} I_{\C_i}^k(\rho) \right)  \cup \I_{\F^{(\ell)}}^k(\rho).
    \]
Hence, the disjointness of the sets $\C_i$, $i \in \J$, and $\F^{(\ell)}$ gives
 \begin{equation*}
    \big|\I_\ell^k(\rho)\big| = \sum_{i=1}^\ell \big| I_{\C_i}^k(\rho)\big|  + \big|\I_{\F^{(\ell)}}^k(\rho)\big|.
 \end{equation*}

The following lemma shows the $\Op\big(k^{-2}\big)$ rate of \Cref{alg:multilevel} when $\big|\I_\ell^k(\rho)\big|$ is sufficiently large.
\begin{lemma} \label{lemma:case-II}
  Suppose that Assumptions \ref{ass:bounded-sublevel-sets}, \ref{ass:lipschitz-hessians}, and \ref{ass:R_i} hold. Let $k \in \N$ and $\theta, \rho \in (0,1)$. If $\big|\I_\ell^k (\rho)\big| > \theta k$, then the sequence generated by \Cref{alg:multilevel} satisfies  
    \begin{equation*}
      f(x_k) - f^* \leq \frac{1}{C_\ell^2 \left(1 + \frac{\theta k}{2}\right)^2}, \qquad C_\ell \coloneqq \sqrt{ \frac{\rho^4 \sigma^3}{32 L \zeta^3 D^3}}.
    \end{equation*}
\end{lemma}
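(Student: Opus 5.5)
The plan is to mirror Case~II of \Cref{thm:two-level-mixed}. Set $\tau\coloneqq|\I_\ell^k(\rho)|>\theta k$, enumerate $\I_\ell^k(\rho)=\{k_0<\cdots<k_{\tau-1}\}$ and put $k_\tau\coloneqq k$. The aim is a uniform per-step decrease of the form $f(x_{k_j})-f(x_{k_j+1})\ge c\,\norm{\nabla f_{k_j}}^{3/2}$. Convexity and monotonicity then turn this into a recurrence of the type covered by \Cref{prop:recurrence-rate}.

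\textbf{Step 1 (coarse indices).} Suppose $k_j\in\I^k_{\C_i}(\rho)$ for some $i\in\J$. Combine \eqref{ineq:multilevel-reduction} with $\norm{R_i\nabla f_{k_j+1}}>\rho\norm{R_i\nabla f_{k_j}}$ to get
\[
f(x_{k_j})-f(x_{k_j+1})\ge \frac{\rho^2\norm{R_i\nabla f_{k_j}}^{3/2}}{8\sqrt{\omega_i^3L/2}}.
\]
Since $k_j\in\C_i\subseteq\mathcal E_i$, we have $\norm{R_i\nabla f_{k_j}}\ge\sigma\norm{\nabla f_{k_j}}$. Together with $\omega_i\le\zeta$, this gives the lower bound $\frac{\rho^2\sigma^{3/2}}{8\zeta^{3/2}\sqrt{L/2}}\norm{\nabla f_{k_j}}^{3/2}$.

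\textbf{Step 2 (fine indices).} Suppose $k_j\in\I^k_{\F^{(\ell)}}(\rho)$. Then \eqref{ineq:reduction-fine} applies, since the fine update of \Cref{alg:multilevel} is the same update as in \Cref{lemma:f-non-increasing}. It gives the bound $\frac{\rho^2}{6\sqrt{L/2}}\norm{\nabla f_{k_j}}^{3/2}$. Because $\sigma<1$ and $\sigma<\min_i\omega_i\le\zeta$, we have $\sigma/\zeta<1$, so the coarse constant is the smaller one. Hence for every $j$,
\[
f(x_{k_j})-f(x_{k_j+1})\ge c\,\norm{\nabla f_{k_j}}^{3/2},\qquad c\coloneqq\frac{\sqrt2\,\rho^2\sigma^{3/2}}{8\zeta^{3/2}\sqrt L}.
\]

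\textbf{Step 3 (recurrence).} Since $k_j+1\le k_{j+1}$ and $(f(x_k))_{k\in\N}$ is non-increasing (by \Cref{lemma:multilevel-coarse-estimates} and the fine-step decrease), we get $f(x_{k_{j+1}})\le f(x_{k_j+1})$. The iterates stay in $\mathcal L(x_0)$, so convexity gives $\Delta_j\coloneqq f(x_{k_j})-f^*\le D\norm{\nabla f_{k_j}}$. Substituting yields
\[
\Delta_{j+1}\le\Delta_j-\frac{c}{D^{3/2}}\Delta_j^{3/2}.
\]
We check that $c/D^{3/2}=C_\ell$: squaring gives $\frac{2\rho^4\sigma^3}{64\zeta^3LD^3}=\frac{\rho^4\sigma^3}{32L\zeta^3D^3}$, which matches the definition. \Cref{prop:recurrence-rate} then gives
\[
f(x_k)-f^*=\Delta_\tau\le \frac{1}{C_\ell^2(1+\tau/2)^2}\le \frac{1}{C_\ell^2(1+\theta k/2)^2},
\]
using $\tau>\theta k$.

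There is no real obstacle here. The only points needing care are that the level index $i$ may vary with $j$, which the uniform bound $\omega_i\le\zeta$ handles, and that monotonicity of $f$ holds across all levels, so that \eqref{ineq:reduction-fine} and \eqref{ineq:multilevel-reduction} can be chained between non-consecutive indices $k_j+1\le k_{j+1}$.
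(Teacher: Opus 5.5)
Your proposal is correct and follows the paper's proof essentially step for step. You derive a uniform per-index decrease $f(x_{k_j})-f(x_{k_j+1})\ge \frac{\sqrt2\rho^2}{8\sqrt L}(\sigma/\zeta)^{3/2}\|\nabla f_{k_j}\|^{3/2}$ from \eqref{ineq:multilevel-reduction} (together with eligibility and $\omega_i\le\zeta$) on coarse indices and from \eqref{ineq:reduction-fine} on fine indices, then combine monotonicity, the convexity bound $\Delta_j\le D\|\nabla f_{k_j}\|$, and \Cref{prop:recurrence-rate}, exactly as the paper does; your explicit check that $c/D^{3/2}=C_\ell$ fills in a detail the paper leaves implicit.
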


\begin{proof}
The proof is given in \Cref{sec:case-II}.
\end{proof}

Further, the coarse correction is also stable with respect to the full gradient. In particular, by \Cref{lemma:bounded-gradient-in-coarse-level} there exist $M_{\C_i} \in (0, \infty)$, $i \in \J$, such that 
\begin{equation*}
      \norm{\gkp}   \leq M_{\C_i} \norm{\gk}, \qquad k \in \C_i.
\end{equation*}
In what follows we show that \Cref{alg:multilevel} converges with $\Op\big(k^{-2}\big)$ when it performs a sufficiently large numbersufficiently of fine steps.
\begin{theorem}
\label{thm:multilevel-conditional}
Suppose that Assumptions~\ref{ass:bounded-sublevel-sets}, \ref{ass:lipschitz-hessians}, and \ref{ass:R_i} hold. Let
\[
    M_{\C^{(\ell)}}
    \coloneqq
    \max\left\{1,\max_{i\in\J}M_{\C_i}\right\},
    \qquad
    h\coloneqq\log_2 M_{\C^{(\ell)}}.
\]
Let $\theta\in(0,2/3)$ and choose
\[
    a\in
    \left(
        \frac{h+3\theta}{h+2},1
    \right).
\]
Suppose also that, for all sufficiently large $k$,
\[
    \big|\F^{(\ell),k}\big|\geq ak.
\]
Then the sequence generated by \Cref{alg:multilevel} satisfies
\[
    f(x_k)-f^*=\mathcal O\big(k^{-2}\big).
\]
\end{theorem}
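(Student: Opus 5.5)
We fix $\rho\coloneqq\frac14$ and split on the size of $\I_\ell^k(\rho)$, in the same way as in the proof of \Cref{thm:two-level-mixed}. The difference is that we do not need a weighted measure $V_k$ here. The assumed lower bound on the number of fine iterations lets us track the full gradient norm $\norm{\gk}$ directly. Coarse steps are then treated simply as bounded perturbations.

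\textbf{Case II: $|\I_\ell^k(\rho)|>\theta k$.} \Cref{lemma:case-II} with $\rho=\frac14$ gives directly
\[
f(x_k)-f^*\le \frac{1}{C_\ell^2(1+\theta k/2)^2}=\Op\big(k^{-2}\big).
\]

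\textbf{Case I: $|\I_\ell^k(\rho)|\le\theta k$.} Take $k$ large enough that $|\F^{(\ell),k}|\ge ak$, so that $|\C^{(\ell),k}|\le(1-a)k$. We classify each $m\in\{0,\ldots,k-1\}$ and bound the one-step ratio $\norm{\grad f_{m+1}}/\norm{\grad f_m}$:
\begin{itemize}
\item if $m\in\F^{(\ell),k}\setminus\I_{\F^{(\ell)}}^k(\rho)$, the ratio is at most $\rho=2^{-2}$, by definition;
\item if $m\in\I_{\F^{(\ell)}}^k(\rho)$, the ratio is at most $2$, by \eqref{ineq:full-grad-bound};
\item if $m\in\C^{(\ell),k}$, the ratio is at most $M_{\C^{(\ell)}}=2^h$, by \Cref{lemma:bounded-gradient-in-coarse-level} applied levelwise.
\end{itemize}
Here we use the convention $M_{\C^{(\ell)}}\ge1$, so coarse steps in or out of $\I_\ell^k(\rho)$ are bounded uniformly.

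The number of contracting fine steps is at least $|\F^{(\ell),k}|-|\I_\ell^k(\rho)|\ge(a-\theta)k$. Since each non-contracting step has ratio at least $1$ (as $2\ge1$ and $2^h\ge1$), we may bound the counts pessimistically. This gives
\[
\norm{\gk}\le 2^{-2(a-\theta)k}\,2^{\theta k}\,2^{h(1-a)k}\norm{\grad f_0}=2^{-\gamma k}\norm{\grad f_0},\qquad \gamma\coloneqq a(h+2)-h-3\theta .
\]
The choice $a>\frac{h+3\theta}{h+2}$ is exactly $\gamma>0$. The restriction $\theta<2/3$ guarantees $\frac{h+3\theta}{h+2}<1$, so the admissible interval for $a$ is nonempty. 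Convexity together with \Cref{ass:bounded-sublevel-sets} then yields
\[
f(x_k)-f^*\le D\norm{\gk}\le D\,2^{-\gamma k}\norm{\grad f_0},
\]
which is geometric and hence $\Op\big(k^{-2}\big)$.

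Taking the maximum of the two case bounds finishes the proof; the finitely many small $k$ are absorbed into the constant.

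The main obstacle is the bookkeeping in Case I. When a step lies in $\I_{\F^{(\ell)}}^k(\rho)$ we may only use the factor $2$, and coarse steps may increase $\norm{\grad f}$ by $2^h$. We must check that overcounting these worst-case growth factors still leaves a net contraction. This works because every factor is at least $1$, so replacing the actual counts by their upper bounds $\theta k$ and $(1-a)k$ only weakens the estimate. The precise threshold on $a$ comes out of this exponent count with $\rho=\frac14$.
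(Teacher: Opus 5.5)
Your proposal is correct and follows essentially the same route as the paper: you split on $|\I_\ell^k(1/4)|$, invoke \Cref{lemma:case-II} when this set is large, and otherwise iterate the one-step full-gradient bounds $M_{\C^{(\ell)}}$, $2$, and $\tfrac14$, using $|\F^{(\ell),k}|\ge ak$ and $|\I_{\F^{(\ell)}}^k|\le\theta k$ to obtain the geometric factor $2^{h+3\theta-a(h+2)}<1$. Your count of at least $(a-\theta)k$ contracting fine steps is just a regrouping of the paper's estimate $M_{\C^{(\ell)}}^{k-|\F^{(\ell),k}|}\,2^{-2|\F^{(\ell),k}|+3|\I_{\F^{(\ell)}}^k|}\norm{\grad f_0}$, and it yields the same exponent.
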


\begin{proof}
Fix a sufficiently large $k\in\N$ such that $\big|\F^{(\ell),k}\big|\geq ak$, and set $    \I_\ell^k
\coloneqq \I_\ell^k(1/4)$, $ \I_{\F^{(\ell)}}^k \coloneqq \I_{\F^{(\ell)}}^k(1/4).$ If
$\big|\I_\ell^k\big|>\theta k$, then \Cref{lemma:case-II} yields
\[
    f(x_k)-f^*=\mathcal O\big(k^{-2}\big).
\]

It remains to consider the case $|\I_\ell^k|\leq\theta k.$ For every $m\in\{0,\ldots,k-1\}$, exactly
one of the following possibilities occurs:
\begin{enumerate}[label=(\alph*)]
    \item If $m\in\C^{(\ell),k}$, then
    \(
        \|\nabla f_{m+1}\|
        \leq
        M_{\C^{(\ell)}}\|\nabla f_m\|.
    \)

    \item If $m\in\I_{\F^{(\ell)}}^k$, then, by
    \eqref{ineq:full-grad-bound},
    \(
        \|\nabla f_{m+1}\|
        \leq
        2\|\nabla f_m\|.
    \)

    \item If
    $m\in\F^{(\ell),k}\setminus\I_{\F^{(\ell)}}^k$,
    then
    \(
        \|\nabla f_{m+1}\|
        \leq
        \frac14\|\nabla f_m\|.
    \)
\end{enumerate}

Since
\(
    \big|\C^{(\ell),k}\big|+\big|\F^{(\ell),k}\big|=k,
\)
iterating these bounds gives
\[
\begin{aligned}
\|\nabla f_k\| &\leq M_{\C^{(\ell)}}^{\,k-\left|\F^{(\ell),k}\right|} 2^{\left|\I_{\F^{(\ell)}}^k\right|}
\left(\frac14\right)^{ \left|\F^{(\ell),k}\right|-\left|\I_{\F^{(\ell)}}^k\right|} \|\nabla f_0\|\\
&= M_{\C^{(\ell)}}^{\,k-\left|\F^{(\ell),k}\right|} 2^{-2\left|\F^{(\ell),k}\right| +3\left|\I_{\F^{(\ell)}}^k\right|} \|\nabla f_0\|.
\end{aligned}
\]
Since
\(
    \big|\F^{(\ell),k}\big|\geq ak \   \text{and} \ 
    \big|\I_{\F^{(\ell)}}^k\big|
    \leq \big|\I_\ell^k\big|
    \leq\theta k,
\)
we obtain
\[
    \|\nabla f_k\|
    \leq
    \beta^k\|\nabla f_0\|,
\]
where
\[
    \beta
    \coloneqq
    M_{\C^{(\ell)}}^{1-a}2^{-2a+3\theta}.
\]
Moreover,
\[
    \log_2\beta
    =
    h+3\theta-a(h+2)<0
\]
by the choice of $a$, and hence $\beta<1$.

Therefore, by convexity and
Assumption~\ref{ass:bounded-sublevel-sets},
\[
    f(x_k)-f^*
    \leq
    D\|\nabla f_k\|
    \leq
    D\beta^k\|\nabla f_0\|
    =
    \mathcal O\big(\beta^k\big)
    =
    \mathcal O\big(k^{-2}\big).
\]
Combining the two cases proves the result.
\end{proof}

\Cref{thm:multilevel-conditional} provides a quantitative condition under which the multilevel
scheme retains the convergence rate of the underlying full-space method. Although this rate need not
be inherited by an arbitrary multilevel realization, the theorem shows that the \(\mathcal
O\big(k^{-2}\big)\) rate is preserved whenever a sufficiently large fraction of the iterations is performed
on the fine level. Such preservation of complexity has also been established for other multilevel
high-order regularization frameworks; see, e.g., \cite{calandra2021high} for the multilevel ARC method and possibly non-convex functions. To the best of our knowledge, \Cref{thm:multilevel-conditional} is the first result showing that a classical multilevel gradient-regularized Newton method can retain the global \(\mathcal O\big(k^{-2}\big)\) function value rate of the full-space method. This is possible to achieve under an explicit quantitative condition on the proportion of fine-level iterations. 

We emphasize that the fine-iteration frequency condition is not restrictive. Indeed, the eligibility condition $\norm{R_i \grad f_k} \ge \varepsilon$ implies that only finitely many coarse iterations can occur, see \Cref{lemma:finite-coarse} for details. Consequently, the algorithm eventually performs exclusively fine-level iterations, and the required frequency condition is automatically satisfied for all sufficiently large $k$. Nevertheless, this condition is not computationally irrelevant. For instance, for large $k$ and \(\theta = 0.1\), \(M_{\mathcal
C^{(\ell)}}=2\) and \(M_{\mathcal C^{(\ell)}}=2000\) yield approximately \(a>0.44\) and \(a>0.87\),
respectively. This indicates that arbitrarily long sequences of coarse corrections cannot not occur before the algorithm enters the fine-only regime. 
In \Cref{sec:rate-with-nested} we develop analysis establishing an explicit convergence bound valid at every iteration, without relying on the eventual transition to the fine-only regime, and which also allows the method to perform arbitrarily long sequences of coarse corrections.

\subsection{Row-nested restriction operators} \label{sec:nested-operators}

In the previous subsection, we established an $\mathcal O\big(k^{-2}\big)$ convergence rate under the condition that a sufficiently large fraction of the iterations is performed on the fine level. Such a condition may lead to a large number of computationally expensive fine-level iterations before \Cref{alg:multilevel} enters its fine-level regime. Here we introduce an additional structure on the restriction operators and discuss its most important properties. This new structure will, later, allow us to show a global $\mathcal O\big(k^{-2}\big)$ rate for \Cref{alg:multilevel} without this condition.

\begin{definition}[Row-nested restriction operators]
\label{def:nested-restrictions}
Let $R_i\in\R^{n_i\times n}$, $i\in\J$, with
\[
    n_1<n_2<\cdots<n_\ell<n.
\]
The family $(R_i)_{i=1}^{\ell}$ is said to be
\emph{row-nested} if, for every $i=1,\ldots,\ell-1$, there exists
\[
    \widehat R_i\in\R^{(n_{i+1}-n_i)\times n}
\]
such that
\[
    R_{i+1}
    =
    \begin{bmatrix}
        R_i\\
        \widehat R_i
    \end{bmatrix}.
\]
\end{definition}

Nested hierarchies of coarse spaces are standard in multigrid and have also been employed in multilevel optimization; see, e.g., \cite{gross2009convergence, kocvara2016multigrid}. Here we impose a more specific algebraic structure: all restriction operators act on the same fine space, and the rows defining a lower-dimensional restriction are retained at every subsequent level. This row-nested construction yields the monotonicity \(\|R_i v\|\le \|R_jv\|\) for \(i<j\), which will play a central role in our convergence analysis.

For every $i<j$ there exists
$\widehat R_{ij}\in\R^{(n_j-n_i)\times n}$ such that
\[
    R_j
    =
    \begin{bmatrix}
        R_i\\
        \widehat R_{ij}
    \end{bmatrix}.
\]
Consequently, for every $v\in\R^n$,
\[
    \|R_jv\|^2
    =
    \|R_iv\|^2+\big\|\widehat R_{ij}v\big\|^2,
\]
and therefore,
\begin{equation}
\label{ineq:nested-restriction-monotonicity}
    \|R_iv\|\leq\|R_jv\|,
    \qquad 1\leq i<j\leq\ell.
\end{equation}
Further, \Cref{def:nested-restrictions} implies nestedness in multilevel spaces. By the definition of $R_j$ we have
\[
P_j=R_j^\top = 
\begin{bmatrix}
P_i & \widehat R_{ij}^{\top}
\end{bmatrix}.
\]
Therefore, since \(n_i<n_j\),
\begin{equation} \label{eq:nested-spaces-property}
  \operatorname{range}(P_i)
\subset
\operatorname{range}(P_j),
\qquad i<j,
\end{equation}
and thus, as in classical multilevel methods with nested subspaces, each successive level enriches the
previous coarse correction space with additional directions. However, the stronger row-nested structure
additionally implies the norm monotonicity \eqref{ineq:nested-restriction-monotonicity}. There also is a
corresponding reverse nesting of null spaces, namely
\[
\ker(R_j)\subset\ker(R_i),
\qquad i<j,
\]
where the inclusion is strict since \(\dim\ker(R_j)=n-n_j<n-n_i=\dim\ker(R_i)\). Thus, larger coarse levels miss fewer directions of the fine-space gradient. 

\Cref{def:nested-restrictions} yields properties that significantly simplify the multilevel algorithm and provides us with further insights. In particular,
\[
    \omega_1\leq\omega_2\leq\cdots\leq\omega_\ell=\zeta,
\]
and the admissible range for \(\sigma\) simplifies to $\sigma\in(0,\min\{1,\omega_1\}).$ Moreover, \eqref{ineq:nested-restriction-monotonicity} implies that
the eligible sets satisfy
\[
    \mathcal E_1\subseteq\mathcal E_2
    \subseteq\cdots\subseteq\mathcal E_\ell, \qquad
    \bigcup_{j<i}\mathcal E_j=\mathcal E_{i-1}, \quad i = 2, \ldots, \ell.
\]
Hence,
\[
    \mathcal C_1=\mathcal E_1,
    \qquad
    \mathcal C_i
    =
    \mathcal E_i\setminus\mathcal E_{i-1},
    \quad i=2,\ldots,\ell,
\]
and,
\[
\mathcal C^{(\ell)} = \bigcup_{i=1}^{\ell}\mathcal C_i = \mathcal E_\ell,
\qquad
\mathcal F^{(\ell)} = \mathbb N\setminus\mathcal E_\ell.
\]
Therefore, \(k\in\mathcal C_i\) means that level \(i\) is the lowest-dimensional level satisfying the eligibility criterion at iteration \(k\). Every level \(j>i\) is then automatically eligible, whereas no level \(j<i\) is.

\begin{remark}
The nested structure also induces a hierarchy of reduced Hessian
matrices. Indeed, for $i<j$,
\[
R_j\nabla^2 f(x)P_j
=
\begin{bmatrix}
R_i\nabla^2 f(x)P_i
&
R_i\nabla^2 f(x)\widehat R_{ij}^{\top}\\
\widehat R_{ij}\nabla^2 f(x)P_i
&
\widehat R_{ij}\nabla^2 f(x)\widehat R_{ij}^{\top}
\end{bmatrix}.
\]
Hence, $R_i\nabla^2 f(x)P_i$ is a principal submatrix of $R_j\nabla^2 f(x)P_j$. Consequently, by the Cauchy interlacing theorem \cite{horn2012matrix}, their eigenvalues satisfy
\[
\lambda_r\!\left(R_j\nabla^2 f(x)P_j\right)
\le
\lambda_r\!\left(R_i\nabla^2 f(x)P_i\right)
\le
\lambda_{r+n_j-n_i}\!\left(R_j\nabla^2 f(x)P_j\right),
\]
for \(r=1,\ldots,n_i\), where the eigenvalues are ordered increasingly. Moreover, if $f$ is convex, then
\[
R_i\nabla^2 f(x)P_i\succeq0,
\qquad i\in\J,
\]
while $\nabla^2f(x)\succ0$ together with the full row rank of $R_i$
implies
\[
R_i\nabla^2 f(x)P_i\succ0.
\]
Thus, increasing the level enriches the reduced model while retaining the curvature information
contained in the preceding levels. However, note that the principal-submatrix relation concerns the unregularized reduced Hessians; the regularized matrices need not be nested because the regularization parameter depends on the level.
\medskip
\end{remark}

\begin{remark}
\label{rem:nested-orthonormal}
An important special case of Definition 3.4 arises when the restriction operators have orthonormal rows, i.e.,
\[
R_iR_i^\top=I_{n_i},\qquad i\in\mathcal J.
\]
Then
\[
Q_i\coloneqq P_iR_i=R_i^\top R_i
\]
is the orthogonal projector onto \(\operatorname{range}(P_i)\). Moreover, \eqref{eq:nested-spaces-property} implies
\[
Q_iQ_j=Q_jQ_i=Q_i,\qquad Q_i\preceq Q_j,\qquad i<j.
\]
Thus, increasing the level enlarges the projected subspace while retaining all directions represented at the preceding levels. 

A simple and computationally efficient construction satisfying \Cref{ass:R_i}, \Cref{def:nested-restrictions}, and the orthonormal-row condition considered above is obtained by selecting nested subsets of the rows of the identity matrix. Specifically, let $S_1\subset\cdots\subset S_\ell\subset\{1,\ldots,n\}$ with $|S_i|=n_i$, and let $R_i$ consist of the rows of $I_n$ indexed by $S_i$. For each $i=1,\ldots,\ell-1$, choosing $\widehat R_i$ as the rows of $I_n$ indexed by $S_{i+1}\setminus S_i$ yields
\[
R_{i+1}=
\begin{bmatrix}
R_i\\
\widehat R_i
\end{bmatrix}.
\]
Since the selected rows are distinct standard basis vectors, $R_iR_i^\top=I_{n_i}$ for every $i\in\mathcal J$. Thus, \Cref{ass:R_i} is satisfied with $\omega_i = 1$, $i \in \J$, and further $\sigma \in (0,1)$.
\medskip
\end{remark}

\begin{remark}
A general construction satisfying both Assumption 3.1 and Definition 3.4 is obtained by choosing any full row-rank restriction operator $R_\ell\in\R^{n_\ell\times n}$ and defining $R_i$ as its first $n_i$ rows, for $i=1,\ldots,\ell-1$. Indeed, the full row-rank of $R_\ell$ guarantees that $\operatorname{rank}(R_i)=n_i$, while the construction yields $R_{i+1}=\left[R_i^\top,\widehat R_i^\top\right]^\top$, where $\widehat R_i$ consists of rows $n_i+1,\ldots,n_{i+1}$ of $R_\ell$. This construction accommodates, for instance, Gaussian matrices, suitable sparse sketching matrices, and coordinate restrictions.
\end{remark}

\subsection{Convergence analysis using the row-nested restriction operators} \label{sec:rate-with-nested}
We now establish an $\mathcal O\big(k^{-2}\big)$ convergence rate for \Cref{alg:multilevel} under the row-nested structure of \Cref{def:nested-restrictions}, without imposing any condition on the frequency of fine-level iterations. The analysis extends the weighted-gradient argument developed for the two-level method. An additional difficulty arises from transitions between different coarse levels, since the corresponding updates control distinct reduced-gradient norms $\|R_i\nabla f_k\|$. For general restriction operators, these quantities need not be comparable across levels. The row-nested structure provides the necessary norm comparisons, which, together with suitable level-dependent weights, allow us to control transitions among all coarse and fine levels.

\begin{theorem} \label{thm:nested-operators}
  Let f be convex and suppose that Assumptions~\ref{ass:bounded-sublevel-sets} and~\ref{ass:lipschitz-hessians} hold. If the sequence \({(x_k)}_{k \in \N}\) is generated by \Cref{alg:multilevel} with row-nested operators for which \Cref{ass:R_i} hold, then
  \begin{equation*}
    f(x_k) - f^* = \Op\big(k^{-2}\big).
  \end{equation*}
\end{theorem}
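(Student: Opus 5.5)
We mirror the proof of \Cref{thm:two-level-mixed}, replacing the two-valued weighted gradient measure by an $(\ell+1)$-valued one. Fix $\rho\in(0,1)$, $\theta\in(0,1)$ and $k\in\N$, and split into two cases according to whether $\big|\I_\ell^k(\rho)\big|>\theta k$ or not. If $\big|\I_\ell^k(\rho)\big|>\theta k$, then \Cref{lemma:case-II} already yields $f(x_k)-f^*\le C_\ell^{-2}(1+\theta k/2)^{-2}$. All remaining work is in the case $\big|\I_\ell^k(\rho)\big|\le\theta k$. There we show that a suitable Lyapunov-type quantity $V_k$ contracts by a factor $q<1$ on iterations outside $\I_\ell^k(\rho)$ and grows by at most a factor $B$ on iterations inside it. Choosing $\theta<-\ln q/(\ln B-\ln q)$ then gives $V_k\le\eta^kV_0$ with $\eta<1$, and the conclusion follows from $f(x_k)-f^*\le D\|\nabla f_k\|\le AD\,V_k$.

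Define $V_m\coloneqq a_i\|R_i\nabla f_m\|$ for $m\in\C_i$ and $V_m\coloneqq b\|\nabla f_m\|$ for $m\in\F^{(\ell)}$, with positive weights to be chosen. Since $\|R_i\nabla f_m\|\ge\sigma\|\nabla f_m\|$ on $\C_i$, we get $\|\nabla f_m\|\le A V_m$ with $A=\max\{1/(a_i\sigma),1/b\}$. We enumerate the transitions $m\to m+1$.

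\textbf{Coarse to fine} ($m\in\C_i$, $m+1\in\F^{(\ell)}$). Use $M_{\C_i}$ and eligibility: $V_{m+1}\le (bM_{\C_i}/(\sigma a_i))\,(\text{factor})V_m$.

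\textbf{Fine to coarse} ($m\in\F^{(\ell)}$, $m+1\in\C_j$). Use $\|R_j v\|\le\omega_j\|v\|$: $V_{m+1}\le(a_j\omega_j/b)(\text{factor})V_m$.

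\textbf{Coarse $i$ to coarse $j$ with $j\ge i$.} The row-nested monotonicity \eqref{ineq:nested-restriction-monotonicity} is useful in the direction $\|R_i v\|\le\|R_j v\|$ for $i<j$, which is the wrong direction here. Instead, since $m+1\in\C_j$ with $j>i$ means $m+1\notin\mathcal E_i$, write $\|R_j\nabla f_{m+1}\|\le\omega_j\|\nabla f_{m+1}\|$ and $\|\nabla f_{m+1}\|\le M_{\C_i}\|\nabla f_m\|\le (M_{\C_i}/\sigma)\|R_i\nabla f_m\|$. This gives $V_{m+1}\le (a_j\omega_jM_{\C_i}/(\sigma a_i))V_m$, but with no $\rho$ factor.

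\textbf{Coarse $i$ to coarse $j$ with $j<i$.} By row-nestedness, $\|R_j\nabla f_{m+1}\|\le\|R_i\nabla f_{m+1}\|\le\rho\|R_i\nabla f_m\|$, so $V_{m+1}\le\rho(a_j/a_i)V_m$.

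These estimates suggest taking weights geometrically decreasing in the level index, $a_1\gg a_2\gg\cdots\gg a_\ell\gg b$, so that moving upward (to a higher coarse level or to the fine level) is a contraction. Moving downward then costs $a_j/a_i$ or $a_j\omega_j/b$, which must be compensated by $\rho$; this fixes $\rho$ small relative to the weight ratios. Inside $\I_\ell^k(\rho)$ the same bounds hold with $\rho$ replaced by $2$, using \eqref{ineq:multilevel-reduced-gradient-stability} and \eqref{ineq:full-grad-bound}, which gives a finite $B$.

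\textbf{Main obstacle.} The weight choices, together with the upward coarse transition $i\to j>i$ and the coarse-to-fine transition, do not exploit a $\rho$-contraction. They rely only on $M_{\C_i}$ stability and the weight gap $a_j/a_i$ (respectively $b/a_i$). These factors must be made $<1$ by the gaps, while all downward transitions remain $<1$ thanks to $\rho$. Concretely, I would set $a_i=t^{i}$ and $b=t^{\ell+1}$ with $t<\sigma/(\zeta M_{\C^{(\ell)}})$, and then choose $\rho<t^{\ell+1}/\zeta$. Then every factor outside $\I_\ell^k(\rho)$ is bounded by $q=\max\{\rho\,t^{-\ell-1}\zeta,\ t\zeta M_{\C^{(\ell)}}/\sigma\}<1$. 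One should double-check that $m\in\C_i\setminus\I_{\C_i}^k(\rho)$ combined with $m+1\in\C_j$, $j>i$, is consistent with this; it is, since only stability is used. The rest is identical to the two-level proof.
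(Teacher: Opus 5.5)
Your proposal is correct and is essentially the paper's proof. The paper also uses geometrically decreasing weights ($a_i=r^{i-1}$, $b=r^{\ell}$), applies row-nested monotonicity only to downward coarse transitions $j<i$, uses the full-gradient stability constant $M_{\C^{(\ell)}}$ for upward coarse and coarse-to-fine transitions, and takes $\rho$ small relative to the weight gaps for fine-to-coarse transitions. The only structural difference is the order of parameter choices: the paper fixes $\theta<1/(1+\ell)$ first and then takes $r$ below an explicit threshold, whereas you choose $\theta$ last from $q$ and $B$.

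One bookkeeping fix is needed. Replace $\zeta$ by $\max\{1,\zeta\}$ (the paper's $\bar{\omega}$) in your choices of $t$, $\rho$ and $q$. Otherwise, when $\zeta<1$, your $q$ does not dominate the coarse-to-fine factor $t^{\ell+1-i}M_{\C^{(\ell)}}/\sigma$ or the downward coarse factor $\rho\, t^{j-i}\le\rho\, t^{1-\ell}$, and both can then exceed $1$.
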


\begin{proof}
  The proof makes use of the set $\I_\ell^k(\rho)$ and as previously we split the analysis into two cases, according to the magnitude of $\I_\ell^k(\rho)$. 
  
  Recall that, due to the row-nestedness, $\zeta = \omega_\ell$, and define $\bar{\omega} \coloneqq \max\{1, \omega_\ell \}$. We choose
  \begin{align*}
    \theta \in \left(0, \frac{1}{1+\ell}\right), \qquad r \in \left(0, \min \left\lbrace \frac{1}{\Lambda}, B \right\rbrace \right), \qquad \rho \in \left(0, \frac{\Lambda}{\bar{\omega}} r^{1+\ell} \right),
  \end{align*}
  where 
  \begin{equation*}
    \Lambda \coloneqq \frac{\bar{\omega} M_{\C^{(\ell)}}}{\sigma}, \qquad B \coloneqq 
    \exp \left( - \frac{(1-\theta) \ln(\Lambda) + \theta \ln(2\bar{\omega})}{1- (1+\ell)\theta}\right),
  \end{equation*}
  and note that $\Lambda \geq 1$ and $B, r, \rho \in (0, 1)$. Moreover, for $i \in \J$, we define
  \[
  a_i \coloneqq r^{i-1}, \qquad b \coloneqq r^\ell,
  \]
  therefore $1=a_1 > \cdots > a_\ell>0$, and note also that $    \frac{a_j}{a_i} = r^{j-i} > 1$, if $j<i$, and $r^{j-i} < 1$, if $j>i$. 
  
  Let us fix a $k \in \N$. If $\big|\I_\ell^k(\rho)\big| > \theta k$, then \Cref{lemma:case-II} provides the
  desired rate of convergence. Assume now that $\big|\I_\ell^k(\rho)\big| \leq \theta k$. Define the weight
  gradient measure for $\ell$ coarse levels
      \begin{equation*}
        V_k \coloneqq
        \begin{cases}
        a_i \|R_i \gk\|, & k \in \C_i, \ i \in \J,\\
        b\|\gk \|,& k\in \F^{(\ell)}.
        \end{cases}
    \end{equation*}
    Thus, by the definition of $a_i$ and $V_k$,
    \begin{equation}\label{ineq:grad-V-relation}
      \norm{\grad f_k} \leq \max \left\lbrace \frac{1}{a_1 \sigma}, \ldots, \frac{1}{a_\ell \sigma}, \frac{1}{b} \right\rbrace V_k = \max \left\lbrace \frac{1}{a_\ell \sigma}, \frac{1}{b} \right\rbrace V_k.
    \end{equation}
    Next, it holds
    \begin{equation*}
      \big(\I_\ell^k(\rho)\big)^{\rm c} \coloneqq \{0, 1, \ldots, k-1\} \setminus \I_\ell^k(\rho) = \left(\bigcup_{i \in \J} C_i^k \setminus \I_{\C_i}^{k} (\rho) \right) \cup \left(\F^{(\ell), k} \setminus \I_{\F^{(\ell)} }^{k} (\rho) \right). 
    \end{equation*}

    We first examine the case $m \in (\I_\ell^k(\rho))^{\rm c}$, where, in particular, we will show that $V_k$ is a contraction. To achieve this, in comparison to the proof of \Cref{thm:two-level-mixed}, we need to examine an extra case which concerns the transitions between the $\ell$ coarse levels: 
    \begin{enumerate}
      \item $m \in \C_i^k \setminus \I_{\C_i}^{k}$, $m+1 \in \C_j$. We analyze this case by splitting into additional three subcases according to the ordering of the coarse levels: 
       \begin{enumerate}
        \item $j < i$. Then, by the row-nestedness \eqref{ineq:nested-restriction-monotonicity}, we have
        \begin{align*}
          & V_{m+1}  = a_j\norm{R_j \grad f_{m+1}} \leq a_j\norm{R_i \grad f_{m+1}} \\ 
          & \quad \ \  \overset{k \notin \I_{\C_i}^{k}}{\leq} \rho a_j \norm{R_i \grad f_{m}} = r^{j-i} \rho V_k \leq r^{1-\ell} \rho V_k,
        \end{align*}
        where the last inequality uses the fact that $x \mapsto r^x$ is non-increasing for $r \in (0,1)$.
        \item $i < j$. Then, 
        \begin{align*}
          V_{m+1} & = a_j\norm{R_j \grad f_{m+1}} \leq a_j \bar{\omega} \norm{\grad f_{m+1}} \\
          & \leq a_j \bar{\omega} M_{\C^{(\ell)}}\norm{\grad f_{m}} \overset{k \in \C_i^k}{\leq} \Lambda r^{j-i} V_m \leq \Lambda r V_m,
        \end{align*}
        where we have use the uniform stability of the coarse corrections, i.e., $ \norm{\gkp}   \leq M_{\C^{(\ell)}} \norm{\gk}$ with  $M_{\C^{(\ell)}}$ defined in \Cref{thm:multilevel-conditional}.
        \item $i=j$. Then we immediately have
        \begin{equation*}
          V_{m+1} \leq \rho V_m.
        \end{equation*}
      \end{enumerate} 
      Combining the above subcases yields
      \begin{equation*}
        V_{m+1} \leq \max \left\lbrace \rho, \Lambda r, r^{1-\ell} \rho \right\rbrace V_m 
        = \Lambda r V_m, 
      \end{equation*}
      where the last equality hold by the choice on $\rho$. This concludes the first case of arbitrary transitions between coarse spaces.
      \item $m \in \C_i^k \setminus \I_{\C_i}^{k}, m+1 \in \F^{(\ell)}.$ Then
      \begin{align*}
      V_{m+1}  \leq b \norm{\grad f_{m+1}} \leq b M_{\C^{(\ell)}} \norm{\grad f_{m}}  
        \overset{k \in \C_i^k}{\leq} r^{\ell - i + 1} \frac{ M_{\C^{(\ell)}}}{\sigma} \norm{\grad f_{m}} \leq \Lambda r V_m.
      \end{align*}
      \item $m \in \F^{(\ell), k} \setminus \I_{\F^{(\ell)} }^{k} (\rho), m+1 \in \C_i$. Then, 
      \begin{align*}
      V_{m+1} = a_i \norm{R_i \grad f_{m+1}} \leq a_i \bar{\omega} \rho \norm{\grad f_{m}} = \frac{\bar{\omega} \rho}{r^{\ell-i+1}} V_m \leq \frac{\bar{\omega} \rho}{r^{\ell}} V_m.
      \end{align*}
      \item $m \in \F^{(\ell), k} \setminus \I_{\F^{(\ell)} }^{k} (\rho), m+1 \in \F^{(\ell)}$, then we directly obtain 
              \begin{equation*}
          V_{m+1} \leq \rho V_m.
        \end{equation*}
    \end{enumerate}
    Therefore, by the definition of $\rho$,
    \begin{equation} \label{ineq:V-non-I}
      V_{m+1} \leq \left\lbrace \rho, \Lambda r, \frac{\bar{\omega} \rho}{r^{\ell}} \right\rbrace V_m 
      = \Lambda r V_m < V_m, \qquad m \in \big(\I_\ell^k(\rho)\big)^{\rm c},
    \end{equation}
    which shows that $V_k$ is a contraction for elements of  $(\I_\ell^k(\rho))^{\rm c}$. 
    
    Assume now that $m \in \I_\ell^k(\rho)$. Similarly, we split our arguments into four cases, to obtain a recurrent relation for $V_k$. However, since $\I_\ell^k(\rho)$ contains a few elements only, $V_k$ is not required to be a contraction.
    \begin{enumerate}
      \item $m \in C_i^k, m+1 \in C_j$. Then, 
      \begin{equation*}
      V_{m+1} = a_j \norm{R_j \grad f_{m+1}} \leq a_j \bar{\omega}  M_{\C^{(\ell)}} \norm{\grad f_{m}} 
      \leq \Lambda r^{j-i} V_m \leq r^{-\ell} V_m, 
      \end{equation*}
      where the last inequality uses the monotonicity of $x \mapsto r^x$, $r \in (0,1)$.
      \item $m \in C_i^k, m+1 \in  \F^{(\ell)}$. Then, 
      \begin{align*}
        V_{m+1} \leq r^{\ell} M_{\C^{(\ell)}} \norm{\grad f_m} \leq r^{\ell - i + 1} \Lambda V_m \leq V_m.
      \end{align*}
      \item $m, \in  \F^{(\ell), k}, m+1 \in \F^{(\ell)}$. Then, by \eqref{ineq:full-grad-bound},
      \begin{equation*}
        V_{m+1} \leq 2 V_m.
      \end{equation*}
      \item $m \in \F^{(\ell), k}, m+1 \in \C_i$. Then,
      \begin{equation*}
        V_{m+1} \leq a_i \norm{R_i \grad f_{m+1}} \leq 2 a_i \bar{\omega} \norm{\grad f_{m}} \leq 2 r^{-\ell} \bar{\omega} V_m.
      \end{equation*}
    \end{enumerate}
    As a result, the above four cases imply
    \begin{equation} \label{ineq:V-in-I}
      V_{m+1} \leq 2 \bar{\omega} r^{-\ell} V_m, \qquad m \in \I_\ell^k(\rho).
    \end{equation}
    Let us denote 
    \begin{equation*}
      \eta \coloneqq r^{1 - (1+\ell) \theta} \Lambda^{1-\theta} (2\bar{\omega})^\theta.
    \end{equation*}
    By the choices of $\theta$ and $\rho$ it easy to see that $r^{1 - (1+\ell) \theta} < 1$ and since $r < B$ we have $\eta < 1$. Hence, by \eqref{ineq:V-non-I} and \eqref{ineq:V-in-I}, we obtain 
    \begin{align*}
      V_k \leq \left(\Lambda r \right)^{k - \left|\I_\ell^k(\rho)\right|} \left(2 r^{-\ell} \bar{\omega}\right)^{\left|\I_\ell^k(\rho)\right|} V_0 \overset{\left|\I_\ell^k(\rho)\right| \le \theta k}{\leq}  \eta^k V_0,
    \end{align*}
    which shows that $V_k$ is a contraction. Now the desired result follows from the last inequality, convexity, and \eqref{ineq:grad-V-relation} as follows. Let 
    \[
      A_\ell= \max\left\{\frac{1}{\sigma r^{\ell-1}},\frac1{r^\ell} \right\}.
    \]
    Then
    \begin{align*}
      f(x_k) - f^* \leq D \norm{\grad f_k} \leq  D A_\ell V_k \leq  D A_\ell V_0 \eta^k
    \end{align*}
    which shows exponential rate for the case $\big|\I_\ell^k(\rho)\big| \leq \theta k$. Hence the two cases imply
    \[
      f(x_k)-f^* \leq \max\left\{ D A_\ell V_0\eta^k,\, \frac{1}{C_\ell^2(1+\theta k/2)^2}
      \right\},
    \] 
which concludes the proof.
\end{proof}

\Cref{thm:nested-operators} establishes a direct $\Op\big(k^{-2}\big)$ convergence bound for the multilevel method with row-nested restriction operators, without imposing any condition on the frequency of fine-level iterations. Although the asymptotic rate also follows for general restriction operators from the eventual fine-only behavior, the present analysis controls the convergence rate throughout the entire sequence, including coarse-dominated phases. In particular, the resulting bound does not depend on the iteration at which the method enters the fine-only regime. 

The main difficulty for achieving this result is that a coarse correction at level $i$ controls $\|R_i\nabla f_{k+1}\|$, but not necessarily the reduced gradient at the subsequently selected level. Row-nestedness resolves this mismatch through monotonicity of the reduced-gradient norms, which, together with level-dependent weights and uniform full-gradient stability, allows us to control arbitrary transitions between levels. The resulting weighted gradient measure contracts outside a set of non-\(\rho\)-contracting iterations, while its growth remains uniformly bounded otherwise. When non-\(\rho\)-contracting iterations are sufficiently frequent, the function-value decrease estimates instead yield the $\Op\big(k^{-2}\big)$ rate. Thus, the analysis accommodates arbitrarily long sequences of coarse corrections without requiring a prescribed frequency of fine iterations or a decrease of the full gradient at every coarse step.

\section{Experiments} \label{sec:experiments}

\begin{figure}[htbp]
    \centering
    \ref{leg:logsum}\par\medskip

    \caption{Convergence comparison of MLRN, row-nested MLRN, regularized Newton (RN), and two-level
        regularized Newton (TLRN) for the log-sum-exp objective with $\gamma\in\{0.05,0.1,0.5\}$. The left
        column shows the objective gap $f(x_k)-f^*$ versus iterations, while the right column shows the
        objective gap versus wall-clock time. Row-nested MLRN and TLRN methods demonstrate comparable convergence behavior to RN.}
    \label{fig:logsum-comparison}
\end{figure}

In this section, we present numerical experiments to support our theoretical findings. In particular, we compare the performance of the two-level regularized Newton (TLRN), multilevel regularized Newton (MLRN), and row-nested multilevel regularized Newton (MLRN nested) methods with that of the full-space gradient-regularized Newton method (RN) \cite{mishchenko2023regularized}. Although our methods retain the $\mathcal{O}\big(k^{-2}\big)$ convergence rate of RN, the constants in their complexity bounds depend on $\sigma<1$, see \Cref{lemma:case-II}, potentially leading to less favorable theoretical bounds. Nevertheless, since the multilevel methods compute some of their updates in lower-dimensional spaces, they may offer significant computational savings. Our experiments aim to illustrate that the multilevel methods exhibit convergence behavior comparable to RN while potentially reducing the overall computational time.

The restriction operators for MLRN are constructed by selecting rows of the identity matrix as in \cite{Tsipinakis2025Adaptive}, whereas for the row-nested method we use the construction described in \Cref{rem:nested-orthonormal}. We consider $\ell=5$ coarse levels with dimensions $n_1=0.1n$, $n_2=0.3n$, $n_3=0.5n$, $n_4=0.7n$, and $n_5=0.9n$, whereas for TLRN we select $d=0.5n$. 

Since the Hessian's Lipschitz constant $L$ is generally unknown, we employ an adaptive line-search procedure, following the approach of \cite[Section 4.1.4.2]{nesterov2018lectures} and \cite{mishchenko2023regularized}. At each iteration, the estimate $L_k$ is increased until the corresponding sufficient-decrease and gradient-stability conditions are satisfied. We set $\varepsilon=10^{-10}$ and terminate the algorithms when $f(x_k)-f^* \leq \varepsilon_f$ or $\norm{\grad f(x_k)}\leq\varepsilon_g$, where $\varepsilon_f=10^{-8}$ and $\varepsilon_g=\varepsilon$. We use L-BFGS to compute a numerical approximation of the optimal objective value $f^*$. We set \(\sigma=0.5\), balancing the frequency of coarse corrections against their effectiveness (note that larger values of $\sigma$ yield more favorable convergence constants but impose a stricter coarse-level eligibility condition, potentially resulting in more fine-level iterations). All experiments are initialized at the origin.

\subsection{Log-sum-exp}
Given data $a_1,\ldots,a_m\in\R^n$ and $b_1,\ldots,b_m\in\R$,
we consider minimizing the log-sum-exp function
\begin{equation*}
    f_\gamma(x)
    = \gamma \ln\left(
        \sum_{i=1}^{m}
        \exp\left(\frac{a_i^\top x-b_i}{\gamma}\right)
    \right),
    \qquad \gamma>0.
    \label{eq:log-sum-exp}
\end{equation*}
The parameter $\gamma$ controls the smoothing of the log-sum-exp objective. We generate a random dataset with $m=1000$ observations and $n=800$ variables and consider three values of this parameter, namely $\gamma\in\{0.05,0.1,0.5\}$, to investigate the sensitivity of the algorithms to different degrees of smoothing.

The results are presented in \Cref{fig:logsum-comparison}. For all three values of $\gamma$, row-nested MLRN and TLRN exhibit convergence behavior comparable to that of RN, both in terms of iterations and computational time. In contrast, the non-nested MLRN method converges substantially more slowly. We suspect that this difference may be related to the fine-iteration frequency condition in \Cref{thm:multilevel-conditional}, which need not be satisfied during the observed iterations. Nevertheless, these findings suggest that row-nested restriction operators can be an attractive choice from both theoretical and computational perspectives.

\begin{figure}[htbp]
    \centering
    \ref{leg:gisette-duke}\par\medskip
    \begin{tikzpicture}
    \begin{axis}[name=ax,
        mlrnaxis, ymode=log,
        xmin=0, xmax=35,
        ymin=2.968e-09, ymax=1.735e+00,
        ytickten={-8,-6,-4,-2,0},
        xlabel={Iterations},
        ylabel={$f(x_k)-f^*$},
        legend to name=leg:gisette-duke]
    \addplot[mlrn] table[row sep=\\] {
      x y \\
      0 0.693147 \\ 1 0.341234 \\ 2 0.221978 \\ 3 0.156725 \\ 4 0.114743 \\ 5 0.0712025 \\
      6 0.0448428 \\ 7 0.0312058 \\ 8 0.0182202 \\ 9 0.0121134 \\ 10 0.00679738 \\ 11 0.00437192 \\
      12 0.00236882 \\ 13 0.00128142 \\ 14 0.000787644 \\ 15 0.000413953 \\ 16 0.000249913 \\ 17 0.000128266 \\
      18 6.57309e-05 \\ 19 3.86296e-05 \\ 20 1.94499e-05 \\ 21 1.13001e-05 \\ 22 5.6e-06 \\ 23 2.77471e-06 \\
      24 1.58442e-06 \\ 25 7.74963e-07 \\ 26 4.38665e-07 \\ 27 2.12032e-07 \\ 28 1.03326e-07 \\ 29 5.91656e-08 \\
      30 3.07516e-08 \\ 31 1.83143e-08 \\ 32 1.36845e-08 \\ 33 1.0955e-08 \\ 34 9.77847e-09 \\
    };
    \addlegendentry{MLRN}
    \addplot[mlrnnested] table[row sep=\\] {
      x y \\
      0 0.693147 \\ 1 0.351924 \\ 2 0.22864 \\ 3 0.160374 \\ 4 0.106423 \\ 5 0.071223 \\
      6 0.0469322 \\ 7 0.0299318 \\ 8 0.0184503 \\ 9 0.00978718 \\ 10 0.00582648 \\ 11 0.00304582 \\
      12 0.00176339 \\ 13 0.000906382 \\ 14 0.000514131 \\ 15 0.000260414 \\ 16 0.000131736 \\ 17 7.33761e-05 \\
      18 3.6682e-05 \\ 19 2.01809e-05 \\ 20 9.99131e-06 \\ 21 4.94788e-06 \\ 22 2.69538e-06 \\ 23 1.32472e-06 \\
      24 6.52368e-07 \\ 25 3.54835e-07 \\ 26 1.76152e-07 \\ 27 9.00848e-08 \\ 28 4.92358e-08 \\ 29 3.04536e-08 \\
      30 2.23469e-08 \\ 31 1.91902e-08 \\ 32 1.51284e-08 \\ 33 1.13527e-08 \\ 34 7.85223e-09 \\
    };
    \addlegendentry{MLRN nested}
    \addplot[rn] table[row sep=\\] {
      x y \\
      0 0.693147 \\ 1 0.291616 \\ 2 0.169193 \\ 3 0.104108 \\ 4 0.0639569 \\ 5 0.0379529 \\
      6 0.0217411 \\ 7 0.0122087 \\ 8 0.00675044 \\ 9 0.00368603 \\ 10 0.00199245 \\ 11 0.00106787 \\
      12 0.000568102 \\ 13 0.000300249 \\ 14 0.000157754 \\ 15 8.24476e-05 \\ 16 4.2884e-05 \\ 17 2.22081e-05 \\
      18 1.14541e-05 \\ 19 5.88436e-06 \\ 20 3.01082e-06 \\ 21 1.53353e-06 \\ 22 7.76618e-07 \\ 23 3.90147e-07 \\
      24 1.93615e-07 \\ 25 9.42246e-08 \\ 26 4.44096e-08 \\ 27 1.98449e-08 \\ 28 8.10823e-09 \\
    };
    \addlegendentry{RN}
    \addplot[tlrn] table[row sep=\\] {
      x y \\
      0 0.693147 \\ 1 0.323456 \\ 2 0.200904 \\ 3 0.134309 \\ 4 0.0921445 \\ 5 0.0629467 \\
      6 0.0420071 \\ 7 0.0270752 \\ 8 0.0168813 \\ 9 0.00809604 \\ 10 0.00492521 \\ 11 0.00292718 \\
      12 0.00139256 \\ 13 0.000815136 \\ 14 0.000385658 \\ 15 0.000223043 \\ 16 0.000126941 \\ 17 5.89582e-05 \\
      18 3.33341e-05 \\ 19 1.54134e-05 \\ 20 8.66531e-06 \\ 21 3.98938e-06 \\ 22 2.23132e-06 \\ 23 1.23211e-06 \\
      24 5.58869e-07 \\ 25 3.0866e-07 \\ 26 1.3972e-07 \\ 27 7.88348e-08 \\ 28 3.70373e-08 \\ 29 1.88081e-08 \\
      30 1.09698e-08 \\ 31 7.42987e-09 \\
    };
    \addlegendentry{TLRN}
    \end{axis}
    \node[anchor=north,font=\small] at (ax.below south) {(a) Gisette, iterations};
    \end{tikzpicture}
    \hfill
    \begin{tikzpicture}
    \begin{axis}[name=ax,
        mlrnaxis, ymode=log,
        xmin=0, xmax=30.6,
        ymin=2.968e-09, ymax=1.735e+00,
        ytickten={-8,-6,-4,-2,0},
        xlabel={Runtime (s)}]
    \addplot[mlrn] table[row sep=\\] {
      x y \\
      0 0.693147 \\ 0.289378 0.341234 \\ 0.557436 0.221978 \\ 0.821544 0.156725 \\ 1.08214 0.114743 \\ 1.58452 0.0712025 \\
      2.04477 0.0448428 \\ 2.30896 0.0312058 \\ 2.77607 0.0182202 \\ 3.07122 0.0121134 \\ 3.53687 0.00679738 \\ 3.80257 0.00437192 \\
      4.26728 0.00236882 \\ 4.7345 0.00128142 \\ 5.00719 0.000787644 \\ 5.46636 0.000413953 \\ 5.73065 0.000249913 \\ 6.21407 0.000128266 \\
      6.67125 6.57309e-05 \\ 6.93631 3.86296e-05 \\ 7.39711 1.94499e-05 \\ 7.66253 1.13001e-05 \\ 8.12234 5.6e-06 \\ 8.58261 2.77471e-06 \\
      8.86199 1.58442e-06 \\ 9.31813 7.74963e-07 \\ 9.58241 4.38665e-07 \\ 10.042 2.12032e-07 \\ 10.5024 1.03326e-07 \\ 10.7664 5.91656e-08 \\
      11.2271 3.07516e-08 \\ 11.6939 1.83143e-08 \\ 11.7873 1.36845e-08 \\ 12.0539 1.0955e-08 \\ 12.5185 9.77847e-09 \\
    };
    \addplot[mlrnnested] table[row sep=\\] {
      x y \\
      0 0.693147 \\ 0.267127 0.351924 \\ 0.529526 0.22864 \\ 0.790137 0.160374 \\ 1.24833 0.106423 \\ 1.71108 0.071223 \\
      2.17185 0.0469322 \\ 2.63325 0.0299318 \\ 3.09394 0.0184503 \\ 3.82027 0.00978718 \\ 4.3024 0.00582648 \\ 5.00095 0.00304582 \\
      5.46795 0.00176339 \\ 6.16167 0.000906382 \\ 6.62809 0.000514131 \\ 7.32517 0.000260414 \\ 8.02063 0.000131736 \\ 8.48979 7.33761e-05 \\
      9.18555 3.6682e-05 \\ 9.65402 2.01809e-05 \\ 10.3486 9.99131e-06 \\ 11.0309 4.94788e-06 \\ 11.5176 2.69538e-06 \\ 12.234 1.32472e-06 \\
      12.9144 6.52368e-07 \\ 13.4224 3.54835e-07 \\ 14.1379 1.76152e-07 \\ 14.8347 9.00848e-08 \\ 15.5462 4.92358e-08 \\ 16.2597 3.04536e-08 \\
      16.946 2.23469e-08 \\ 17.6668 1.91902e-08 \\ 18.6288 1.51284e-08 \\ 19.5943 1.13527e-08 \\ 20.5477 7.85223e-09 \\
    };
    \addplot[rn] table[row sep=\\] {
      x y \\
      0 0.693147 \\ 1.08027 0.291616 \\ 2.21109 0.169193 \\ 3.29773 0.104108 \\ 4.33974 0.0639569 \\ 5.38297 0.0379529 \\
      6.426 0.0217411 \\ 7.47015 0.0122087 \\ 8.5293 0.00675044 \\ 9.5925 0.00368603 \\ 10.6371 0.00199245 \\ 11.6978 0.00106787 \\
      12.7447 0.000568102 \\ 13.7868 0.000300249 \\ 14.8456 0.000157754 \\ 15.9064 8.24476e-05 \\ 16.9845 4.2884e-05 \\ 18.0453 2.22081e-05 \\
      19.1044 1.14541e-05 \\ 20.1633 5.88436e-06 \\ 21.223 3.01082e-06 \\ 22.287 1.53353e-06 \\ 23.3288 7.76618e-07 \\ 24.3738 3.90147e-07 \\
      25.418 1.93615e-07 \\ 26.4628 9.42246e-08 \\ 27.5762 4.44096e-08 \\ 28.6168 1.98449e-08 \\ 29.6604 8.10823e-09 \\
    };
    \addplot[tlrn] table[row sep=\\] {
      x y \\
      0 0.693147 \\ 0.472023 0.323456 \\ 0.940126 0.200904 \\ 1.40833 0.134309 \\ 1.87575 0.0921445 \\ 2.34369 0.0629467 \\
      2.81126 0.0420071 \\ 3.27368 0.0270752 \\ 3.73204 0.0168813 \\ 4.78667 0.00809604 \\ 5.25437 0.00492521 \\ 5.71603 0.00292718 \\
      6.76159 0.00139256 \\ 7.23062 0.000815136 \\ 8.32499 0.000385658 \\ 8.79135 0.000223043 \\ 9.25218 0.000126941 \\ 10.299 5.89582e-05 \\
      10.7667 3.33341e-05 \\ 11.8089 1.54134e-05 \\ 12.2749 8.66531e-06 \\ 13.3293 3.98938e-06 \\ 13.805 2.23132e-06 \\ 14.3121 1.23211e-06 \\
      15.3669 5.58869e-07 \\ 15.864 3.0866e-07 \\ 16.9224 1.3972e-07 \\ 17.389 7.88348e-08 \\ 18.4801 3.70373e-08 \\ 19.5553 1.88081e-08 \\
      20.6 1.09698e-08 \\ 21.6455 7.42987e-09 \\
    };
    \end{axis}
    \node[anchor=north,font=\small] at (ax.below south) {(b) Gisette, runtime};
    \end{tikzpicture}

    \vspace{0.8em}

    \begin{tikzpicture}
    \begin{axis}[name=ax,
        mlrnaxis, ymode=log,
        xmin=0, xmax=21.6,
        ymin=1.826e-09, ymax=1.776e+00,
        ytickten={-8,-6,-4,-2,0},
        xlabel={Iterations},
        ylabel={$f(x_k)-f^*$}]
    \addplot[mlrn] table[row sep=\\] {
      x y \\
      0 0.693147 \\ 1 0.15956 \\ 2 0.0581917 \\ 3 0.0227258 \\ 4 0.00912504 \\ 5 0.00372137 \\
      6 0.0015348 \\ 7 0.000639153 \\ 8 0.000268621 \\ 9 0.000113924 \\ 10 4.87567e-05 \\ 11 2.10567e-05 \\
      12 9.17541e-06 \\ 13 4.03382e-06 \\ 14 1.78946e-06 \\ 15 8.01012e-07 \\ 16 3.61722e-07 \\ 17 1.64745e-07 \\
      18 7.56655e-08 \\ 19 3.50577e-08 \\ 20 1.64112e-08 \\ 21 7.7959e-09 \\
    };
    \addplot[mlrnnested] table[row sep=\\] {
      x y \\
      0 0.693147 \\ 1 0.159741 \\ 2 0.0582728 \\ 3 0.0227713 \\ 4 0.00915124 \\ 5 0.0037363 \\
      6 0.00154315 \\ 7 0.000643727 \\ 8 0.000271074 \\ 9 0.000115212 \\ 10 4.94217e-05 \\ 11 2.13962e-05 \\
      12 9.34789e-06 \\ 13 4.12087e-06 \\ 14 1.83279e-06 \\ 15 8.22337e-07 \\ 16 3.72166e-07 \\ 17 1.69853e-07 \\
      18 7.81665e-08 \\ 19 3.6286e-08 \\ 20 1.70185e-08 \\ 21 8.10038e-09 \\
    };
    \addplot[rn] table[row sep=\\] {
      x y \\
      0 0.693147 \\ 1 0.141749 \\ 2 0.0504893 \\ 3 0.0192 \\ 4 0.00749097 \\ 5 0.00296374 \\
      6 0.00118443 \\ 7 0.000477524 \\ 8 0.000194157 \\ 9 7.96153e-05 \\ 10 3.29305e-05 \\ 11 1.3742e-05 \\
      12 5.78676e-06 \\ 13 2.45928e-06 \\ 14 1.05477e-06 \\ 15 4.56483e-07 \\ 16 1.99339e-07 \\ 17 8.78302e-08 \\
      18 3.90267e-08 \\ 19 1.74686e-08 \\ 20 7.86086e-09 \\
    };
    \addplot[tlrn] table[row sep=\\] {
      x y \\
      0 0.693147 \\ 1 0.150709 \\ 2 0.05382 \\ 3 0.0205865 \\ 4 0.0080899 \\ 5 0.00322643 \\
      6 0.00130055 \\ 7 0.000529112 \\ 8 0.00021718 \\ 9 8.99358e-05 \\ 10 3.75784e-05 \\ 11 1.58451e-05 \\
      12 6.74293e-06 \\ 13 2.8961e-06 \\ 14 1.25535e-06 \\ 15 5.49125e-07 \\ 16 2.42413e-07 \\ 17 1.07997e-07 \\
      18 4.85483e-08 \\ 19 2.20212e-08 \\ 20 1.00858e-08 \\ 21 4.67671e-09 \\
    };
    \end{axis}
    \node[anchor=north,font=\small] at (ax.below south) {(c) Duke, iterations};
    \end{tikzpicture}
    \hfill
    \begin{tikzpicture}
    \begin{axis}[name=ax,
        mlrnaxis, ymode=log,
        xmin=0, xmax=16.1,
        ymin=1.826e-09, ymax=1.776e+00,
        ytickten={-8,-6,-4,-2,0},
        xlabel={Runtime (s)}]
    \addplot[mlrn] table[row sep=\\] {
      x y \\
      0 0.693147 \\ 0.107316 0.15956 \\ 0.189849 0.0581917 \\ 0.269627 0.0227258 \\ 0.348134 0.00912504 \\ 0.426704 0.00372137 \\
      0.510825 0.0015348 \\ 0.587515 0.000639153 \\ 0.664879 0.000268621 \\ 0.7443 0.000113924 \\ 0.823644 4.87567e-05 \\ 0.902933 2.10567e-05 \\
      0.984368 9.17541e-06 \\ 1.0645 4.03382e-06 \\ 1.14271 1.78946e-06 \\ 1.22532 8.01012e-07 \\ 1.30544 3.61722e-07 \\ 1.382 1.64745e-07 \\
      1.4615 7.56655e-08 \\ 1.53983 3.50577e-08 \\ 1.61899 1.64112e-08 \\ 1.69816 7.7959e-09 \\
    };
    \addplot[mlrnnested] table[row sep=\\] {
      x y \\
      0 0.693147 \\ 0.075281 0.159741 \\ 0.161511 0.0582728 \\ 0.241713 0.0227713 \\ 0.324653 0.00915124 \\ 0.407831 0.0037363 \\
      0.489157 0.00154315 \\ 0.572311 0.000643727 \\ 0.653658 0.000271074 \\ 0.732206 0.000115212 \\ 0.810311 4.94217e-05 \\ 0.895539 2.13962e-05 \\
      0.974757 9.34789e-06 \\ 1.05202 4.12087e-06 \\ 1.13764 1.83279e-06 \\ 1.21614 8.22337e-07 \\ 1.29337 3.72166e-07 \\ 1.37782 1.69853e-07 \\
      1.45812 7.81665e-08 \\ 1.54099 3.6286e-08 \\ 1.62539 1.70185e-08 \\ 1.707 8.10038e-09 \\
    };
    \addplot[rn] table[row sep=\\] {
      x y \\
      0 0.693147 \\ 0.741092 0.141749 \\ 2.16882 0.0504893 \\ 2.9132 0.0192 \\ 3.66182 0.00749097 \\ 4.40453 0.00296374 \\
      5.14713 0.00118443 \\ 5.89295 0.000477524 \\ 6.63739 0.000194157 \\ 7.38009 7.96153e-05 \\ 8.12174 3.29305e-05 \\ 8.86554 1.3742e-05 \\
      9.61025 5.78676e-06 \\ 10.3585 2.45928e-06 \\ 11.1026 1.05477e-06 \\ 11.8472 4.56483e-07 \\ 12.5905 1.99339e-07 \\ 13.3354 8.78302e-08 \\
      14.1053 3.90267e-08 \\ 14.8898 1.74686e-08 \\ 15.6721 7.86086e-09 \\
    };
    \addplot[tlrn] table[row sep=\\] {
      x y \\
      0 0.693147 \\ 0.259394 0.150709 \\ 0.463393 0.05382 \\ 0.651306 0.0205865 \\ 0.830558 0.0080899 \\ 1.01122 0.00322643 \\
      1.19159 0.00130055 \\ 1.36947 0.000529112 \\ 1.55353 0.00021718 \\ 1.73696 8.99358e-05 \\ 1.96636 3.75784e-05 \\ 2.18405 1.58451e-05 \\
      2.38181 6.74293e-06 \\ 2.57252 2.8961e-06 \\ 2.75113 1.25535e-06 \\ 2.93384 5.49125e-07 \\ 3.11331 2.42413e-07 \\ 3.29476 1.07997e-07 \\
      3.47367 4.85483e-08 \\ 3.65467 2.20212e-08 \\ 3.83513 1.00858e-08 \\ 4.01907 4.67671e-09 \\
    };
    \end{axis}
    \node[anchor=north,font=\small] at (ax.below south) {(d) Duke, runtime};
    \end{tikzpicture}
    \caption{Convergence comparison of MLRN, row-nested MLRN, regularized Newton (RN), and two-level regularized Newton (TLRN) using the large Gisette and Duke datasets for logistic regression. The left column reports the objective gap $f(x_k)-f^*$ versus iterations, while the right column reports the objective gap versus wall-clock time. The multilevel gradient-regularized Newton methods offer substantial computational savings compared to their full-space counterpart.}
    \label{fig:logistic-gisette-duke}
\end{figure}

\begin{figure}[htbp]
    \centering
    \ref{leg:a9a-mushrooms-w8t}\par\medskip

    \caption{Convergence comparison of MLRN, row-nested MLRN, regularized Newton (RN), and two-level regularized Newton (TLRN) on the a9a, Mushrooms, and w8t datasets for logistic regression. The left column reports the objective gap $f(x_k)-f^*$ versus iterations, while the right column reports the objective gap versus wall-clock time. Row-nested restriction operators significantly improve the convergence behavior of MLRN.}
    \label{fig:logistic-a9a-mushrooms-w8t}
\end{figure}

\subsection{Logistic regression}
In the second experiment, we consider logistic regression problems using real-world datasets. The
objective function is given by
\begin{equation}
    f(x)=\frac{1}{m}\sum_{i=1}^{m}
    \ln\left(1+\exp\big(-b_i a_i^\top x\big)\right)
    +\frac{\lambda_2}{2}\norm{x}^2,
    \label{eq:logistic-regression}
\end{equation}
where $b_i\in\{-1,1\}$ and we set $\lambda_2=10^{-10}$ to make the problem ill-conditioned. The results are presented in \Cref{fig:logistic-gisette-duke} and \Cref{fig:logistic-a9a-mushrooms-w8t}.

Across the considered datasets, nested MLRN and TLRN exhibit convergence behavior comparable to that of the full-space RN method. Moreover, as shown in \Cref{fig:logistic-gisette-duke}, when applied to large datasets, both methods achieve substantial reductions in computational time, illustrating the benefits of performing corrections in lower-dimensional spaces. In contrast, non-row-nested MLRN exhibits slower convergence on some instances, particularly those shown in \Cref{fig:logistic-a9a-mushrooms-w8t}, which may prevent the method from converging faster than RN in computational time. These observations suggest that the structure of the restriction operators, particularly row-nestedness, can play an important role in the practical performance of multilevel gradient-regularized Newton methods. Overall, the results demonstrate that row-nested restriction operators can yield computationally efficient multilevel methods,
complementing the theoretical convergence guarantees established in this paper.

\section{Conclusions} \label{sec:conclusions}

We developed and analyzed multilevel gradient-regularized \sloppy Newton methods for unconstrained convex optimization, establishing a global $\Op\big(k^{-2}\big)$ convergence rate in function values. Among our results, the proof of \Cref{thm:nested-operators} is of particular interest as it combines a weighted gradient measure with row-nestedness, a special structure of the restriction operators introduced in \Cref{sec:nested-operators} for the purposes of this analysis. These ideas are new in multilevel optimization theory and we believe that they could prove useful for establishing fast convergence rates for other multilevel methods, such as cubic-regularized Newton and higher-order methods for convex optimization.

\section*{Code and data availability} 
The code and data for reproducing the numerical results is available at\medskip
\begin{center}
\url{https://github.com/Ntsip/Multilevel-Gradient-Regularized-Newton-Method}.
\end{center} \medskip
It is authored by Nick Tsipinakis and published under the MIT license.

\section*{CRediT author statement} \textbf{Nick Tsipinakis:} Conceptualization, Methodology, Software, Formal Analysis, Investigation, Writing --- Original Draft, Visualization; \textbf{Matthias Voigt:} Writing --- Review \& Editing, Visualization, Supervision.

\section*{Acknowledgments}
The authors used AI tools to assist with language and figure editing, mathematical proof verification, and the development and debugging of numerical implementations. All mathematical results,
algorithms, and computational experiments were reviewed and validated by the authors. The authors assume responsibility for all content.

\bibliographystyle{siamplain}
\bibliography{references}


\appendix

\section{Background and proofs of auxiliary results}
\label{app:proofs}

\begin{proposition}
\label{prop:recurrence-rate}
Let ${(\Delta_\ell)}_{\ell=0}^{\tau}$ be a tuple of nonnegative numbers satisfying
\[
\Delta_{\ell+1}
\leq
\Delta_\ell-C\Delta_\ell^{3/2},
\qquad \ell=0,\ldots,\tau-1,
\]
for some $C>0$ and $\tau \ge 1$. Then
\[
\Delta_\tau
\leq
\frac{1}{C^2(1+\tau/2)^2}.
\]
\end{proposition}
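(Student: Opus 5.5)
The plan is the standard reciprocal-square-root argument. Set $u_\ell \coloneqq \Delta_\ell^{-1/2}$ and show that $u_\ell$ grows at least linearly in $\ell$. If some $\Delta_j = 0$, then the recurrence and nonnegativity force $\Delta_\ell = 0$ for all $\ell \ge j$, and the claim is trivial. So assume all $\Delta_\ell > 0$.

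\textbf{Bounding the initial term.} Nonnegativity of $\Delta_1$ gives $C\Delta_0^{3/2} \le \Delta_0$. Hence $\Delta_0 \le C^{-2}$, that is, $C\Delta_\ell^{1/2} \le 1$ for every $\ell$, since the sequence is nonincreasing. This settles the base point $u_0 \ge C$.

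\textbf{One-step increment.} From $\Delta_{\ell+1} \le \Delta_\ell(1 - C\Delta_\ell^{1/2})$ we get
\[
\frac{1}{\Delta_{\ell+1}^{1/2}} - \frac{1}{\Delta_\ell^{1/2}} = \frac{\Delta_\ell - \Delta_{\ell+1}}{\Delta_\ell^{1/2}\Delta_{\ell+1}^{1/2}\bigl(\Delta_\ell^{1/2} + \Delta_{\ell+1}^{1/2}\bigr)} \ge \frac{C\Delta_\ell^{3/2}}{\Delta_\ell^{1/2}\Delta_{\ell+1}^{1/2}\cdot 2\Delta_\ell^{1/2}} = \frac{C}{2}\,\frac{\Delta_\ell^{1/2}}{\Delta_{\ell+1}^{1/2}} \ge \frac{C}{2}.
\]
The first inequality uses $\Delta_{\ell+1} \le \Delta_\ell$ in the sum of square roots. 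The last inequality uses the same monotonicity once more.

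\textbf{Summation.} Telescoping over $\ell = 0, \ldots, \tau-1$ gives $\Delta_\tau^{-1/2} \ge u_0 + C\tau/2 \ge C(1 + \tau/2)$. Squaring and inverting yields the stated bound.

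The only delicate point is the base estimate $\Delta_0 \le C^{-2}$. It relies on $\tau \ge 1$, which ensures $\Delta_1$ exists and is nonnegative; this in turn gives $1 - C\Delta_0^{1/2} \ge 0$. The remaining steps are routine.
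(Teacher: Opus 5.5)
Your proof is correct and follows the paper's argument. Both track $\Delta_\ell^{-1/2}$, show that it increases by at least $C/2$ per step, and telescope using $\Delta_0^{-1/2}\ge C$, which comes from the nonnegativity of $\Delta_1$; the only difference is that you obtain the per-step increment via the difference-of-square-roots identity, whereas the paper writes $\Delta_{\ell+1}^{-1/2}\ge \Delta_\ell^{-1/2}\bigl(1-C\Delta_\ell^{1/2}\bigr)^{-1/2}$ and applies $(1-s)^{-1/2}\ge 1+s/2$.
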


\begin{proof}
If \(\Delta_\ell=0\) for some \(\ell\), the recurrence and nonnegativity imply \(\Delta_j=0\) for all \(j\geq\ell\), so the result follows immediately. Otherwise,
nonnegativity implies $C\sqrt{\Delta_\ell} < 1$, and hence
\[
\frac{1}{\sqrt{\Delta_{\ell+1}}}
\geq
\frac{1}{\sqrt{\Delta_\ell}}
\frac{1}{\sqrt{1-C\sqrt{\Delta_\ell}}}
\geq
\frac{1}{\sqrt{\Delta_\ell}}+\frac{C}{2},
\]
where we used $(1-s)^{-1/2}\geq1+s/2$ for $s\in[0,1)$. Summing $\frac{1}{\sqrt{\Delta_{\ell+1}}}- \frac{1}{\sqrt{\Delta_{\ell}}}$ over
$\ell=0,\ldots,\tau-1$, exploiting the telescoping property, and using
$1/\sqrt{\Delta_0}\geq C$ gives
\[
\frac{1}{\sqrt{\Delta_\tau}}
\geq
C\left(1+\frac{\tau}{2}\right),
\]
which proves the claim.
\end{proof}
We also refer to  \cite[Theorem 4.1.4]{nesterov2018lectures} for a related recurrence estimate.

\begin{lemma}\label{lemma:finite-coarse}
Suppose that Assumptions~\ref{ass:bounded-sublevel-sets}, \ref{ass:lipschitz-hessians}, and \ref{ass:R_i} hold.
Then \sloppy \Cref{alg:multilevel} performs only finitely many coarse iterations.
\end{lemma}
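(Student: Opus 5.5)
We argue by contradiction. Suppose \Cref{alg:multilevel} performs infinitely many coarse iterations. Since $\J$ is finite, some level $i\in\J$ has $\C_i$ infinite. The plan is to show that each such iteration gives a uniform positive decrease of $f$; summing these decreases then contradicts the fact that $f$ is bounded below by $f^*$.

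First, by \Cref{lemma:multilevel-coarse-estimates} and \eqref{ineq:full-grad-bound}, the sequence $(f(x_k))_{k\in\N}$ is non-increasing. It is bounded below by $f^*$, so it converges, and therefore $f(x_k)-f(x_{k+1})\to 0$.

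Next, take $k\in\C_i$. By \eqref{ineq:multilevel-reduction},
\[
f(x_k)-f(x_{k+1})\;\ge\;\frac{\|R_i\nabla f_{k+1}\|^2}{8\sqrt{\omega_i^3L\|R_i\nabla f_k\|/2}}.
\]
The numerator involves the gradient at the next iterate, so the eligibility bound $\|R_i\nabla f_k\|\ge\varepsilon$ does not bound it from below directly. This is the main obstacle, and I would handle it by following the route of \Cref{prop:rate-coarse-steps}.

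If $\|R_i\nabla f_{k+1}\|\ge \tfrac14\|R_i\nabla f_k\|$, then
\[
f(x_k)-f(x_{k+1})\;\ge\;\frac{\|R_i\nabla f_k\|^{3/2}}{128\sqrt{\omega_i^3L/2}}\;\ge\;\frac{\varepsilon^{3/2}}{128\sqrt{\omega_i^3L/2}}.
\]
If instead $\|R_i\nabla f_{k+1}\|<\tfrac14\|R_i\nabla f_k\|$, this iteration gives no useful decrease bound. I would then avoid the iterate-by-iterate argument and use a more robust bound from the step itself. Write $s_k=x_{k+1}-x_k$. The model optimality conditions give
\[
R_i\nabla f_k=-\bigl(R_i\nabla^2 f_k P_i+\alpha_k I\bigr)u_k,\qquad s_k=P_iu_k,
\]
where $\alpha_k=\sqrt{\omega_i^3L\|R_i\nabla f_k\|/2}$. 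Combining this with the Lipschitz Hessian Taylor bound and $\|R_i\nabla^2 f_kP_i\|\le\omega_i^2H$, I expect
\[
f(x_k)-f(x_{k+1})\;\gtrsim\;\alpha_k\|u_k\|^2\;\ge\;\frac{\alpha_k\|R_i\nabla f_k\|^2}{(\omega_i^2H+\alpha_k)^2}.
\]
This is the standard decrease of a regularized step, as in the proofs of \cite{mishchenko2023regularized}. The right-hand side is increasing in $\|R_i\nabla f_k\|$, and $\|R_i\nabla f_k\|\ge\varepsilon$ on $\C_i$, so it is bounded below by a constant $c(\varepsilon)>0$ depending only on $\varepsilon,\omega_i,L,H$. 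If this direct bound can be verified, it covers both cases and the case split becomes unnecessary. I would therefore try the direct bound first and fall back on the case split only where needed.

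Finally, I sum the decreases. Every term $f(x_k)-f(x_{k+1})$ is nonnegative, and infinitely many of them are at least $c(\varepsilon)>0$. This gives
\[
f(x_0)-f^*\;\ge\;\sum_k\bigl(f(x_k)-f(x_{k+1})\bigr)=\infty,
\]
which is a contradiction. Hence each $\C_i$ is finite, so $\C^{(\ell)}$ is finite.
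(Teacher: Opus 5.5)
Your direct route is essentially the paper's proof, which likewise combines $f(x_k)-f(x_{k+1})\geq\frac{\alpha_k}{2}\norm{u_k}^2$ with $\norm{u_k}\geq\norm{R_i\grad f_k}/(\omega_i^2H+\alpha_k)$ and $\norm{R_i\grad f_k}\geq\varepsilon$. The decrease bound you only "expect" does hold: by convexity and $\norm{u_k}\leq\norm{R_i\grad f_k}/\alpha_k=2\alpha_k/(\omega_i^3L)$, the cubic Taylor term is at most $\frac{\alpha_k}{3}\norm{u_k}^2$, so you can drop the fallback case split, whose second case would not close on its own.

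The only difference is in how $\alpha_k$ in the denominator is handled. The paper uses the upper bound $\alpha_k\leq\omega_i^2\sqrt{LG/2}$ with $G\coloneqq\max_{x\in\mathcal L(x_0)}\norm{\grad f(x)}$, whereas your valid observation that $\alpha_k g^2/(\omega_i^2H+\alpha_k)^2$ is increasing in $g=\norm{R_i\grad f_k}$ avoids $G$. Also, monotonicity of $f$ on fine steps follows from \eqref{ineq:reduction-fine}, not \eqref{ineq:full-grad-bound}.
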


\begin{proof}
Let $k\in\mathcal C_i$ for some $i\in\mathcal J$, and define
\[
\alpha_{i,k}\coloneqq
\sqrt{\frac{\omega_i^3L\norm{R_i\grad f_k}}{2}},
\qquad
d_{i,k}\coloneqq
-\left(R_i\nabla^2 f_kP_i+\alpha_{i,k}I_{n_i}\right)^{-1}
R_i\grad f_k.
\]
By the coarse-level eligibility condition,
\[
\alpha_{i,k}\geq
\sqrt{\frac{\omega_i^3L\varepsilon}{2}}>0.
\]
Moreover, since the iterates remain in the compact set
$\mathcal L(x_0)$, there exists 
\[
G\coloneqq\max_{x\in\mathcal L(x_0)}\norm{\grad f(x)}<\infty.
\]
Consequently,
\[
\alpha_{i,k}\leq
\bar\alpha_i\coloneqq\omega_i^2\sqrt{\frac{LG}{2}}.
\]
Using the definition of $d_{i,k}$, we obtain
\[
\varepsilon
\leq \norm{R_i\grad f_k}
\leq
\left(\omega_i^2H+\alpha_{i,k}\right)
\norm{d_{i,k}}
\leq
\left(\omega_i^2H+\bar\alpha_i\right)
\norm{d_{i,k}}.
\]
Hence,
\[
\norm{d_{i,k}}\geq
\frac{\varepsilon}{\omega_i^2H+\bar\alpha_i}.
\]

By the descent estimate in
\cite[Lemma 3.3]{tsipinakis2025regularized},
the coarse update satisfies
\[
f(x_k)-f(x_{k+1})
\geq \frac{\alpha_{i,k}}{2}\norm{d_{i,k}}^2.
\]
Combining the preceding estimates yields
\[
f(x_k)-f(x_{k+1})
\geq
\underbrace{
\frac12\sqrt{\frac{\omega_i^3L\varepsilon}{2}}
\left(
\frac{\varepsilon}{\omega_i^2H+\bar\alpha_i}
\right)^2
}_{\eqqcolon \delta_i>0}.
\]
Since the number of coarse levels is finite,
$\delta\coloneqq\min_{i\in\mathcal J}\delta_i>0$.
Therefore, every coarse iteration decreases the objective
function by at least $\delta$. By monotonicity of the
function values and their lower boundedness by $f^*$,
\[
\big|\mathcal C^{(\ell)}\big| \delta
\leq f(x_0)-f^*.
\]
Thus, $\big|\mathcal C^{(\ell)}\big|<\infty$, proving the result.
\end{proof}

\subsection{Proof of \Cref{lemma:multilevel-coarse-estimates}}
\label{sec:multilevel-coarse-estimates}

\begin{proof}
The result follows from the analysis in
\cite[Lemma~3.2 and Corollary~3.4]{tsipinakis2025regularized},
which allows the restriction operator to vary with the iteration.
Indeed, at an iteration $k\in\C_i$, applying the corresponding
estimates with $R_k=R_i$ and $\omega_i=\|R_i\|$ yields
\[
\|R_i\nabla f_{k+1}\|
\leq
2\alpha_k\|d_{i,k}\|,
\qquad
\|d_{i,k}\|
\leq
\frac{\|R_i\nabla f_k\|}{\alpha_k},
\]
where
\[
\alpha_k=
\sqrt{\frac{\omega_i^3L\|R_i\nabla f_k\|}{2}}, \qquad 
d_{i,k}\coloneqq
-\left(R_i\nabla^2f_kP_i+\alpha_kI_{n_i}\right)^{-1}R_i\nabla f_k.
\]
These inequalities yield
\eqref{ineq:multilevel-reduced-gradient-stability}, while the proof of
\cite[Corollary~3.4]{tsipinakis2025regularized} gives
\eqref{ineq:multilevel-reduction}.
\end{proof}

\subsection{Proof of \Cref{lemma:case-II}}
\label{sec:case-II}
\begin{proof}
The proof of Case II in \Cref{thm:two-level-mixed} extends naturally to $\ell$ coarse levels. Thus, below, only the main steps of the proof will be discussed. Let $\tau\coloneqq\big|\mathcal I_\ell^k(\rho)\big|$ and enumerate
$\mathcal I_\ell^k(\rho)=\{k_0,\ldots,k_{\tau-1}\}$ increasingly,
with $k_\tau\coloneqq k$. If
$k_j\in\mathcal I_{\mathcal C_i}^k(\rho)$ for some $i\in\mathcal J$,
then, using the coarse decrease estimate $\omega_i\leq\zeta$ and
$\|R_i\nabla f_{k_j}\|\geq\sigma\|\nabla f_{k_j}\|$, we obtain
\[
f(x_{k_j})-f(x_{k_j+1})
\geq
\frac{\sqrt{2}\rho^2}{8\sqrt L}
\left(\frac{\sigma}{\zeta}\right)^{3/2}
\|\nabla f_{k_j}\|^{3/2}.
\]
The same bound holds for
$k_j\in\mathcal I_{\mathcal F^{(\ell)}}^k(\rho)$ by the fine-level
decrease estimate. Hence, arguing as in Case~II of
\Cref{thm:two-level-mixed},
\[
\Delta_{j+1}
\leq
\Delta_j-C_\ell\Delta_j^{3/2},
\qquad
\Delta_j\coloneqq f(x_{k_j})-f^*.
\]
Since $\tau>\theta k$, Proposition~\ref{prop:recurrence-rate} yields
\[
f(x_k)-f^*
\leq
\frac{1}{C_\ell^2(1+\theta k/2)^2}.
\]
\end{proof}


\end{document}